\theoremstyle{plain}
\newtheorem{lemma}{Lemma}
\newtheorem{definition}{Definition}
\newtheorem{corollary}{Corollary}
\newtheorem{proposition}{Proposition}
\newtheorem{theorem}{Theorem}
\newtheorem{question}{Question}
\newtheoremstyle{derp}
{3pt}
{3pt}
{}
{}
{\upshape}
{:}
{.5em}
{}
\theoremstyle{derp}
\newtheorem{example}{Example}
\newcommand{\Z}{\mathbb{Z}}
\newcommand{\N}{\mathbb{N}}
\newcommand\xqed[1]{%
  \leavevmode\unskip\penalty9999 \hbox{}\nobreak\hfill
  \quad\hbox{#1}}
\newcommand\qee{\xqed{$\fullmoon$}}
\newcommand{\Aut}{\mathrm{Aut}}
\newcommand{\Sym}{\mathrm{Sym}}
\newcommand{\Alt}{\mathrm{Alt}}
\title{Wreath products in the automorphism group of a full shift}
\author{
Ville Salo \\
vosalo@utu.fi
}
\begin{document}
\maketitle

\begin{abstract}
We prove that if a subgroup $H$ of the automorphism group $\Aut(\Sigma^\Z)$ of a non-trivial full shift acts on points of finite support (= points bi-asymptotic to a fixed point) with a free orbit, then for every finitely-generated abelian group $A$, the abstract group $A \wr H$ also embeds in $\Aut(\Sigma^\Z)$. The groups admitting an action with such a free orbit include $A \wr \Z$ for $A$ a finite abelian group, and finitely-generated free groups. The class of such groups is also closed under commensurability and direct products. We obtain for example that $\Z \wr \Z$, $\Z_2 \wr (\Z_2 \wr \Z)$ and $\Z \wr (\Z_2 \wr \Z)$ embed in $\Aut(\Sigma^\Z)$. The group $\Z \wr \Z$ is the first example of a finitely-generated torsion-free subgroup of $\Aut(\Sigma^\Z)$ with infinite cohomological dimension, answering an implicit question of Kim and Roush and an explicit question of the author. We also explore a simpler variant of the construction that gives embeddings of certain Neumann groups, as well as some near-misses to higher iterated wreath products.
\end{abstract}

\section{Introduction}


The \emph{full shift} is $\Sigma^\Z$, where $\Sigma$ is a finite alphabet, seen as a dynamical system under the $\Z$-action of the shift map $\sigma$, defined by $\sigma(x)_i = x_{i+1}$. Its automorphism group $\Aut(\Sigma^\Z)$ consists of $\sigma$-commuting self-homeomorphisms of $\Sigma^\Z$, also known as reversible cellular automata.

This paper continues the study of the family
\[ \mathcal{G} = \{ G \mbox{ group} \;|\; G \hookrightarrow \Aut(\Sigma^\Z) \} \]
of groups that abstractly embed into the automorphism group of $\Z$-full shift, which we sometimes call \emph{groups of cellular automata}, the study of which begun in \cite{He69,BoLiRu88}. The main ``upper bounds'' known on $\mathcal{G}$ are that groups in it are countable \cite{He69}, residually finite \cite{BoLiRu88}, and the word problems of f.g.\ groups in $\mathcal{G}$ are in co-NP \cite{Sa20e,KiRo90,BoLiRu88}. 

There is much more work on the constructive side, i.e.\ many complicated behaviors have been exhibited in f.g.\ groups in $\mathcal{G}$. The Tits' alternative can fail \cite{Sa17d}, the torsion problem can be undecidable \cite{BaKaSa23,Sa22b}, there can be distorted infinite cyclic subgroups \cite{CaSa22}, and the conjugacy problem is in a sense undecidable in all large enough groups in $\mathcal{G}$ \cite{Sa20e}. We also know that $\mathcal{G}$ is closed under commensurability \cite{KiRo90} and countable graph products \cite{Sa18d,Sa20c}.

Less is known in the ``tame'' end of the spectrum of groups. In particular, very little is known about solvable groups in $\mathcal{G}$. Indeed, already abelian groups are a mystery: all finitely-generated abelian groups are embeddable due to closure properties of $\mathcal{G}$, but we already do not know whether the additive group of dyadic rationals $(\Z[1/2], +)$ is in $\mathcal{G}$. Residual finiteness does not prevent this, and in general it is a major open problem whether $\Aut(\Sigma^\Z)$ can have any elements of infinite order with roots of infinitely many orders \cite{Bo08}. For general subshifts, such behavior is exhibited in \cite{BoLiRu88,Sa14d}. 

Next, one may consider nilpotent groups. Now, whether there are \emph{any} finitely-generated nilpotent groups in $\mathcal{G}$ (which are not virtually abelian) is a major open problem. The issue is that such groups always contain a copy of the three-dimensional integer Heisenberg group with presentation
\[ \langle a, b \;|\; [a, [a, b]], [b, [a, b]] \rangle, \]
and this group has a distorted $\Z$-subgroup (namely $\langle [a, b] \rangle$). We only know a single example of a distorted $\Z$-subgroup in the automorphism group of a $\Z$-subshift \cite{CaSa22} and have no reason to believe it is the center of a Heisenberg subgroup. The question of whether the Heisenberg group is in $\mathcal{G}$ is essentially due to Kim and Roush \cite{KiRo90}.

Next, one may consider metabelian groups. Again we are in trouble already with the most basic finitely-generated examples, as the Baumslag-Solitar group $\Z[1/2] \rtimes \Z$ contains a copy of $\Z[1/2]$. It also has distortion elements, and here the situation is worse than with the Heisenberg group, as the distortion function is exponential (much faster than the rate demonstrated for the example in \cite{CaSa22}).

In light of these difficult problems a more promising class to consider are wreath products. To guide us to which types of wreath products we should consider, we should consider the known restrictions: countability, residual finiteness, and the complexity of the word problem.

Countability gives no interesting restrictions. The fact $\Aut(\Sigma^\Z)$ is residually finite, on the other hand, gives nontrivial restrictions. We recall Gruenberg's theorem \cite{Gr57}: The group $G = K \wr H$ is residually finite if and only if $K, H$ are residually finite and $H$ finite or $K$ abelian. For $H$ finite the wreath product embedding problem was completely solved by Kim and Roush \cite{KiRo90}, namely $\Aut(A^\Z) \wr H \leq \Aut(A^\Z)$ for all such $H$. After taking this into account, the co-NPness of the word problem does not pose further restrictions, in the sense that if $A$ is f.g.\ abelian and $B$ has co-NP word problem, then $A \wr B$ has co-NP word problem. (See Lemma~\ref{lem:WreathNP} for a proof and further discussion.)

Thus a natural question is:

\begin{question}
Is $\mathcal{G}$ closed under wreath products with abelian base? More precisely, if $K$ is an abelian group in $\mathcal{G}$ and $H \in \mathcal{G}$, do we always have $K \wr H \in \mathcal{G}$?
\end{question}

Of course the answer is positive if and only if it is positive for the choice $H = \Aut(\Sigma^\Z)$. Before the present paper, the state-of-the-art with solvable groups obtained from wreath product constructions is \cite{Sa20c}, where we proved in particular the following theorem:

\begin{theorem}[\cite{Sa20c}]
For $n \geq 1$, the groups $\Z_2 \wr \Z^n$ are in $\mathcal{G}$.
\end{theorem}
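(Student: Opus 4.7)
Plan: I would embed $\Z_2 \wr \Z^n$ as a group of reversible tape automorphisms on a full shift $\Sigma^\Z$ for a suitable alphabet $\Sigma$. For $n = 1$ (the lamplighter group), the classical ``head on a tape'' construction works directly. For $n \geq 2$, one must also simulate the $n$-dimensional lattice geometry inside the $1$-dimensional tape.

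Fix an alphabet $\Sigma$ containing a blank symbol, bit symbols $\{0,1\}$, and auxiliary separator symbols. Designate a subset $X\subseteq\Sigma^\Z$ of well-formed configurations: bi-infinite blanks punctuated by a single finite region that encodes a finitely supported function $f\colon\Z^n\to\Z_2$ together with a head position $v\in\Z^n$, with the bits of $f$ stored in a contiguous block whose bounding box and coordinate counters are recorded by the separators.

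Next, I would define $n+1$ generating automorphisms: a bit-flip $\tau$ toggling $f(v)$, and $n$ commuting translations $\sigma_1,\ldots,\sigma_n$ each advancing the head by one step in a virtual direction of $\Z^n$. Each acts as the identity outside $X$. The delicate point is that $\sigma_i$ must remain a bijection when the head exits the current bounding box: $\sigma_i$ would insert a new blank hyperplane of bits, and $\sigma_i^{-1}$ would symmetrically retract it when the trailing hyperplane is all-blank. The separator markers let both decisions be made locally.

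The main obstacle is implementing the $\sigma_i$ as genuine \emph{cellular automata}, i.e.\ shift-commuting continuous maps on $\Sigma^\Z$, despite the fact that a single application may in principle shuffle arbitrarily many cells (when a counter overflows, or a new hyperplane must be inserted between existing bit rows). The standard remedy is a conveyor-belt-style encoding: each application performs only a bounded local update, and auxiliary ``carry'' or ``signal'' symbols propagate residual changes lazily over subsequent applications. Reversibility is then built in by defining $\sigma_i$ and $\sigma_i^{-1}$ symmetrically, and $X$ is invariant under each generator. Verifying the wreath-product relations is then routine: the $\sigma_i$ commute by construction, $\tau^2=1$ is immediate, and conjugates $\sigma^v\tau\sigma^{-v}$, $\sigma^w\tau\sigma^{-w}$ commute for distinct $v,w\in\Z^n$ because they toggle distinct bits. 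Faithfulness holds since distinct elements of $\Z_2\wr\Z^n$ produce distinct toggles or distinct head positions on any well-formed starting configuration, giving the desired embedding $\Z_2\wr\Z^n\hookrightarrow\Aut(\Sigma^\Z)$.
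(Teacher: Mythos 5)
Your proposal has a genuine gap, most visibly for $n \geq 2$. A reversible cellular automaton has a bounded radius $r$: the new symbol at each cell depends only on the cells at distance at most $r$. In your scheme the generator $\sigma_i$ moves a head one step in a virtual direction of $\Z^n$, but under any flattening of an $n$-dimensional bounding box of side $L$ into a contiguous one-dimensional block, a step in a ``slow'' direction moves the head on the order of $L^{n-1}$ tape cells, which is unbounded. A CA of fixed radius cannot move a head symbol more than $r$ cells per application, so $\sigma_i$ cannot be a single cellular automaton. The ``lazy carry signal'' fix you invoke is the Turing-machine trick for handling overflow, but an automorphism of the full shift must be a single well-defined map whose powers and conjugates satisfy the wreath-product relations \emph{exactly}; with signals still in flight, $\sigma_i\sigma_j$ and $\sigma_j\sigma_i$ produce different intermediate configurations and restoring commutativity is not routine. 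Likewise ``inserting a new hyperplane'' when the bounding box grows is a shift of infinitely many cells, which is neither local nor locally invertible. Finally, the conveyor-belt encoding is not a remedy for unbounded head jumps; it serves a different purpose in the paper (packing an action on bi-infinite configurations consistently onto finite segments).

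The intended argument is far simpler and needs no head, bounding box, or insertion at all. Take alphabet $\{0,1\}^{n+1}$. For $i \leq n$ let $s_i$ be the partial shift that shifts the $i$-th track and fixes the others, and let $a$ be the involution that adds, mod $2$, the product of the first $n$ tracks to the $(n+1)$-th track. Both are reversible cellular automata of bounded radius. The $s_i$ generate $\Z^n$, $a^2 = \mathrm{id}$, and the conjugate of $a$ by $s_1^{j_1}\cdots s_n^{j_n}$ adds $\prod_i \sigma^{-j_i}(x^i)$ to the last track (where $x^i$ denotes the $i$-th track). These conjugates all commute, being additions to the last track, and testing on configurations where each $x^i$ is the indicator of $\{j_i\}$ shows they are independent over $\Z_2$. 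This gives a faithful action of $\Z_2 \wr \Z^n$ by reversible cellular automata. The example in this paper spells out the $n = 1$ case of exactly this sum-of-tracks construction from \cite{Sa20c}; the conveyor-belt machinery is what lets one iterate wreath products and handle the one-sided setting, not what is needed for this base result.
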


(Here $\Z_2$ is the group with two elements.) More generally, we constructed the groups $A \wr H$ for some groups $H$, including free abelian groups and free non-abelian groups, and $A$ a finite abelian group. The cases $\Z \wr \Z$ and $\Z_2 \wr (\Z_2 \wr \Z)$ were left open in \cite{Sa20c}. Here, we solve these problems and slightly more:

\begin{theorem}
\label{thm:MainUgly}
If $A$ is a finitely-generated abelian group, $B$ is a finite abelian group, $C$ is a finitely-generated free group, and $n \in \N$, then $A \wr ((B \wr \Z)^n \times C^n)$ is in $\mathcal{G}$.
\end{theorem}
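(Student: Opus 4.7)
The theorem decomposes into two ingredients announced in the abstract: (a) a wreath-product embedding lemma --- if $H \leq \Aut(\Sigma^\Z)$ acts on some finite-support point with trivial stabiliser, then $A \wr H \in \mathcal{G}$ for every finitely-generated abelian $A$; and (b) a structural package saying that the class of groups admitting such an action contains $B \wr \Z$ (for $B$ finite abelian) and every finitely-generated free group, and is closed under finite direct products. Granted (a) and (b), the theorem is immediate: iterate the direct-product closure on $n$ copies of the $B \wr \Z$-action and $n$ copies of a free-group action to realise $H = (B \wr \Z)^n \times C^n$ inside some $\Aut(\Sigma^\Z)$ with the required free orbit, and then invoke (a) once with base $A$.

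Item (b) rests on mostly existing work. The group $B \wr \Z$ embeds in $\Aut(\Sigma^\Z)$ by Kim--Roush, and its natural lamp-over-orbit representation has a free orbit on the configuration with a single non-identity lamp at one site; for $C$ free, the ping-pong/marker constructions of \cite{Sa17d,Sa20c} give an embedding together with a finite-support seed pattern on which no non-trivial reduced word acts trivially. Direct-product closure is then straightforward: on the product alphabet $(\Sigma_1 \times \Sigma_2)^\Z$, the diagonal action of $H_1 \times H_2$ on $(x_1, x_2)$ has trivial stabiliser whenever each factor's action does, and $(x_1, x_2)$ still has finite support.

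The technical heart is (a). Writing $A = \Z^k \oplus F$ with $F$ finite, the plan is to enlarge $\Sigma$ so that each translate $hx$ of the free-orbit point carries a private ``lamp region'' coding a value of $A$, and to build reversible cellular automata implementing both the $H$-action and the lamp-changes. The finite summand $F$ is handled by a Kim--Roush-style coordinate extension permuted in sync with $H$, while each $\Z$-summand is stored as a finite-support perturbation of a chosen background inside a window located near the support of $hx$, since no fixed finite alphabet can store an unbounded integer as a single symbol. Freeness of the orbit on finite-support points is used essentially here: it names the lamp at $hx$ unambiguously by $h$, and lets the windows of distinct lamps be spread disjointly as their values grow, so that lamp operations indexed by different orbit elements commute exactly as the wreath-product relations demand. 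The main obstacle is precisely the $\Z$-lamp case --- realising unbounded lamp values by bounded-radius reversible automata without any interaction between neighbouring lamp windows is what blocked the $\Z \wr \Z$ case in \cite{Sa20c}, and the ``free orbit on finite support'' hypothesis is engineered so that this non-interference can be enforced by placing and growing windows along disjoint parts of the shift.
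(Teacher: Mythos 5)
Your high-level decomposition matches the paper exactly: the theorem is reduced to (a) a lemma that pointy actions yield $A \wr H \in \mathcal{G}$ for finitely-generated abelian $A$ (Lemma~\ref{lem:PointyWreath}), plus (b) the observation that the class of pointy groups contains $B \wr \Z$ for $B$ finite abelian (from \cite{Sa20c}) and finitely-generated free groups (from Alperin's construction), and is closed under direct products. Given (a) and (b), the deduction of the theorem is one line, and you have that part right.

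The problem is that your sketch of the technical heart (a) describes a mechanism that does not obviously work and is not the paper's. You propose that each lamp value in a $\Z$-summand be ``stored as a finite-support perturbation of a chosen background inside a window located near the support of $hx$,'' with windows ``spread disjointly as their values grow.'' But a reversible cellular automaton has a fixed finite radius, so it cannot detect when a window is about to overflow nor relocate or resize windows dynamically; and if multiple translates $hx$ have lamps lit, nothing in the freeness hypothesis prevents their windows from colliding as values grow. This is precisely the obstruction you identify at the end of the paragraph, and your text offers no concrete way around it --- ``the hypothesis is engineered so that this non-interference can be enforced'' is an assertion, not a construction.

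The paper resolves this very differently. It first introduces an abstract group $G$ of permutations of the disjoint union of cyclic tapes $\Sigma^{\Z_n}$, generated by $\Aut_0(\Sigma^\Z)$ acting on periodic configurations together with \emph{AFOs} that permute and shift finitely many explicit orbits of the form $\sigma^j(u_i 0^{n-|u_i|})$. The main technical result, Theorem~\ref{thm:GinG}, shows $G \in \mathcal{G}$ via \emph{conveyor belts with floating boundaries}: the all-zero tails are replaced by explicit $>$ and $<$ markers, so that when a simulated tape segment lies between two walls, the entire simulated configuration is visible locally and the automaton can detect exactly when a target orbit is present. Inside $G$, Lemma~\ref{lem:WeakPointyWreath} then builds $\Z \wr_{x_0} H$ not by growing a window but by storing the lamp value as a \emph{shift offset}: one extra track carries a single bit, and the AFO shifts the whole tape by one when the first track is exactly $x_0$ in standard position; the $\Z$-value is recovered as the relative position of that bit with respect to the $H$-configuration. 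No window ever grows, because there is never any need to store an unbounded integer in a bounded region --- the integer is a displacement. You should look at Theorem~\ref{thm:GinG} and Lemma~\ref{lem:WeakPointyWreath} for the actual mechanism; without some equivalent of the floating-boundary trick, your step (a) remains unproven.
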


The statement above is a little cluttered; it is simply a practical way to summarize (more or less) all of the (non-permutational) wreath products we were able to obtain from the main construction. 

\begin{corollary}
The groups $\Z \wr \Z^n$,
$\Z_2 \wr (\Z_2 \wr \Z)$, and
$\Z \wr (\Z_2 \wr \Z)$
are in $\mathcal{G}$.
\end{corollary}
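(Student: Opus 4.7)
The plan is to deduce all three embeddings directly from Theorem~\ref{thm:MainUgly} by choosing its parameters $A$ (a f.g.\ abelian group), $B$ (a finite abelian group), $C$ (a f.g.\ free group), and the exponent appropriately; no new construction is needed.

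For $\Z \wr \Z^n$, I would take $A = \Z$ and let both $B$ and $C$ be the trivial group, using the same exponent $n$. These degenerate but permissible choices --- the trivial group is both a finite abelian group and the free group of rank zero --- give $B \wr \Z \cong \Z$ and $C^n$ trivial, so Theorem~\ref{thm:MainUgly} yields $\Z \wr (\Z^n \times \{1\}) \cong \Z \wr \Z^n \in \mathcal{G}$.

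For $\Z_2 \wr (\Z_2 \wr \Z)$ and $\Z \wr (\Z_2 \wr \Z)$, I would set $B = \Z_2$, let $C$ be trivial, and take the exponent equal to $1$, so that $(B \wr \Z)^1 \times C^1 \cong \Z_2 \wr \Z$. Then choosing $A = \Z_2$ (respectively $A = \Z$) produces the two desired wreath products as instances of Theorem~\ref{thm:MainUgly}.

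The main (indeed only) obstacle is confirming that the degenerate parameter values $B = \{1\}$ or $C = \{1\}$ are allowed by the hypotheses of Theorem~\ref{thm:MainUgly}; this is a harmless convention point rather than a genuine difficulty. Given it, the corollary is an immediate instantiation, so I would not expect any nontrivial work beyond recording the substitutions above.
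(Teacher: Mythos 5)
Your proposal is correct and matches the paper's intent exactly: the corollary is stated in the paper without a separate proof precisely because it is an immediate instantiation of Theorem~\ref{thm:MainUgly}, and your parameter choices ($B$ and/or $C$ trivial, $n$ as needed) are the right ones, with the trivial group being a finite abelian group and the rank-zero free group as you note.
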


Kim and Roush wrote in 1990 \cite{KiRo90} that all known finitely generated torsion free groups in $\mathcal{G}$ known at the time had a subgroup of finite index which is ``locally nilpotent and of finite cohomological dimension''. Roush clarified in private communication \cite{Ro18} that ``locally nilpotent'' should say ``residually nilpotent''. 

Despite us nowadays knowing many examples of interesting groups in $\mathcal{G}$, to our knowledge the only known interesting finitely-generated torsion-free groups exhibited before the present paper are still precisely the graph groups from \cite{KiRo90,BoLiRu88}, which are always residually nilpotent \cite[Theorem~2.3]{DuKr92}, and of finite cohomological dimension \cite{KiKo14}. The group $\Z \wr \Z$ of course has infinite cohomological dimension, as it contains $\Z^d$ for all $d$, providing the first solution to (a part of) the problem implicit in Kim and Roush's statement (of finding groups without the properties listed). 

All torsion-free groups provided by the theorem above are residually nilpotent (see Lemma~\ref{lem:AllTFN}), so the other half of the observation of Kim and Roush remains valid. On the side we note that there are certainly many non-(residually nilpotent) groups \emph{with} torsion in $\mathcal{G}$, simply because $\mathcal{G}$ contains all finite groups \cite{He69}; it is easy to also find examples that are not even virtually residually nilpotent, for example there are f.g.\ groups in $\mathcal{G}$ containing copies of every finite group (this follows from the main result of \cite{Sa22b}, or can be proved directly).


The methods of the present paper still do not get very far into iterated wreath products:



\begin{question}
\label{q:AreThe}
Which of $\Z_2 \wr (\Z_2 \wr \Z^2)$, $\Z_2 \wr (\Z \wr \Z)$, $\Z_2 \wr (\Z_2 \wr (\Z_2 \wr \Z))$ are in $\mathcal{G}$?
\end{question}


In \cite{Sa20c}, we obtained embeddings of $\Z_2 \wr \Z^n$ also in the one-sided setting (with varying alphabet sizes). The construction of the present paper does not seem to admit such a variant.

\begin{question}
Are $\Z \wr \Z$, $\Z_2 \wr (\Z_2 \wr \Z)$ and $\Z \wr (\Z_2 \wr \Z)$ groups of one-sided cellular automata, i.e.\ do they embed in the automorphism group of a full $\N$-shift?
\end{question}

The main new technical idea of this paper is a modification to the standard conveyor belt trick by allowing ``floating boundaries'' around the support of a configuration: the $0$s around the support are replaced with $>$s on the left, and $<$s on the right.

This contruction is explained in the proof of Theorem~\ref{thm:GinG} in general, and Lemma~\ref{lem:PointyWreath} shows how to get wreath products out of it. In Example~\ref{ex:ZZ2Z} we give a more explicit construction in the specific situation of embedding $\Z \wr (\Z_2 \wr \Z)$. Section~\ref{sec:Iteration} gives a simpler construction for $\Z \wr \Z$, and illustrates why naive iteration of the construction in Lemma~\ref{lem:PointyWreath} does not lead to iterated permutational wreath products.

Theorem~\ref{thm:GinG} and Lemma~\ref{lem:PointyWreath} specifically say that we get wreath products $A \wr H$ from ``pointy'' actions of groups $H$, which simply means there is a free orbit on a configuration with finite $0$-support. Under a weaker condition on the action on finite points, we also obtain some permutational wreath products (Lemma~\ref{lem:WeakPointyWreath}).

The standard constructions of lamplighter-type groups (with finite base) and free groups are already pointy. We are in fact not aware of many other interesting pointy actions. One could answer Question~\ref{q:AreThe} in the positive by finding pointy actions for the top groups. (In which case one would even obtain embeddings for the variants with $\Z$ as the base group.) 

\begin{question}
Do the groups $\Z_2 \wr \Z^2$, $\Z \wr \Z$, or $\Z_2 \wr (\Z_2 \wr \Z)$ admit pointy actions, i.e.\ representations by elements of $\Aut(\Sigma^\Z)$ so that some configuration of finite $0$-support has free orbit?
\end{question}

We cannot even rule out that $\Aut(\Sigma^\Z)$ itself admits an abstract pointy action.

In Section~\ref{sec:Neumann}, we show that the simpler construction of Section~\ref{sec:Iteration} gives rise to some Neumann groups (from \cite{Ne37}) in $\mathcal{G}$, specifically the ones coming from eventually periodic sequences.

\begin{question}
Are there Neumann groups corresponding to aperiodic sequences in $\mathcal{G}$?
\end{question}

\section{Definitions}

The identity element of a group $G$ is $e_G$; $\N = \{0,1,2,\ldots\}$, $\Z_+ = \{1, 2,3, \ldots\}$. In a wreath product $K \wr H$ we usually refer to $K$ as the \emph{base} and $H$ as the \emph{top} group; such a group is a semidirect product $K^H \rtimes H$ where $H$ acts by the regular action. The free group on $n$ generators is $F_n$. If a group $G$ acts on a set $X$, an \emph{orbit} is $Gx = \{gx \;|\; g \in G\}$ for some $x \in X$, and a \emph{free orbit} is $Gx$ such that $g \mapsto gx$ is injective.

Throughout, $\Sigma$ is a finite discrete set called the \emph{alphabet}. The set $\Sigma^\Z$ is called the \emph{full shift}, and it is a compact dynamical system under the \emph{shift}, which is the homeomorphism $\sigma : \Sigma^\Z \to \Sigma^\Z$ defined by $\sigma(x)_i = x_{i+1}$. We write $\Z_n$ for the cyclic group with $n$ elements. The elements of $\Sigma^\Z$ are called \emph{points} or \emph{configurations}.

The set $\Sigma^+$ denotes all nonempty finite words over the alphabet $\Sigma$, i.e.\ the free semigroup on generators $\Sigma$ with concatenation as multiplication. Write $\Sigma^*$ for the corresponding free monoid adding the empty word. We write concatenation of $u, v \in \Sigma^*$ as $u \cdot v$ or just $uv$. We also work with left-infinite words $x \in \Sigma^{-\N}$ and right-infinite words $\Sigma^{\N}$, and can concatenate these together in various ways, with obvious interpretations, e.g.\ words from $x \in \Sigma^{-\N}, y \in \Sigma^{\N}$ and $w \in \Sigma^*$ join together into a point $x.wy \in \Sigma^\Z$, where $.$ is used to denote where the new origin is (let us say the symbol to the right of $.$ is coordinate zero).

A word or configuration over alphabet $\Sigma^k$ can be interpreted as $k$ \emph{tracks} of configurations over $\Sigma$. This simply refers to coordinatewise projection. Beware that for $k = 2$, we often instead want to interpret a word $w \in (\Sigma^2)^*$ as a \emph{conveyor belt}, as in Figure~\ref{fig:ConveyorBelt}, in which case the second track is read in reverse. We will use the term ``conveyor belt'' to clarify when this is done.

Sometimes we will define a notion that is really about positions in a configuration, such as defining a position $i \in \Z$ to be \emph{good} (in a configuration or word $x$) if it is contained in an interval $[a, b]$ such that the word $x_{[a, b]}$ has some property. We will in such discussions instead speak of particular symbols or subwords being good, and say things like ``the subword $v$ in $uvw$ is good'', meaning really that the interval where we explicitly write the subword $v$ in $uvw$ is good in whatever configuration, or \emph{context}, $uvw$ is being taken from. This should not cause confusion, and could be made precise by defining positioned words (which are converted into unpositioned words when necessary). 

The finite set $\Sigma^{\Z_n}$ can be thought of as a $\Z$ or a $\Z_n$-system in an obvious way, with dynamics given by the map $\sigma$ defined by the same formula. For $n \in \Z_+$, the subset of $\Sigma^\Z$ of points $x$ satisfying $\sigma^n(x) = x$ is in natural correspondence with $\Sigma^{\Z_n}$. Such points are called \emph{$n$-periodic}. We write elements of $\Sigma^{\Z_n}$ the same way as we write words in $\Sigma^n$ (i.e.\ words $\Sigma^n$ of length $n$ over alphabet $\Sigma$).

The \emph{automorphism group of the full shift} $\Aut(\Sigma^\Z)$ consists of shift-commuting homeomorphisms $f : \Sigma^\Z \to \Sigma^\Z$, which we call \emph{automorphisms}. They are precisely the \emph{reversible cellular automata} i.e.\ the bijective maps which admit a finite \emph{neighborhood} $N \subset \Z$ and a \emph{local rule} $f_{\mathrm{loc}} : \Sigma^N \to \Sigma$ such that $f(x)_i = f_{\mathrm{loc}}(x_{i + N})$. The group $\Aut(\Sigma^\Z)$ is countable (due to local rules), residually finite (since periodic points are dense in $\Sigma^\Z$ and closed under $\Aut(\Sigma^\Z)$), and is not finitely generated \cite{BoLiRu88}.

Our alphabet $\Sigma$ typically contains a special symbol $0$ called \emph{zero}, which can be taken to be part of the structure. The \emph{nonzero} symbols are the ones that are not zero. Write $\Aut_0(\Sigma^\Z)$ for the point-stabilizer of $0^\Z$ in $\Aut(\Sigma^\Z)$. It is a finite-index subgroup, and contains an embedded copy of $\Aut(\Sigma^\Z)$ \cite{Sa22b}. A point $x \in \Sigma^\Z$ is \emph{$0$-finite} if $S = \{i \in \Z \;|\; x_i \neq 0\}$ is finite. Then $S$ is the \emph{$0$-support} of $x$.


\section{The main technical result}

\begin{definition}
Let $\Sigma \ni 0$ be a finite alphabet. A set of finite words $u_i \in \Sigma^+$ is $n_0$-safe if: the words $u_i$ begin and end with nonzero symbols, are distinct (in particular these two imply that $\ldots 000 u_i 000 \ldots$ have disjoint shift orbits), $\max |u_i| \leq n_0$ and we can uniquely deduce $u_i$ and $j$ from $\sigma^j(u_i 0^{n - |u_i|})$ for any $n \geq n_0$.
\end{definition}

We call $n_0$ the \emph{safety threshold}. Here, $u_i 0^{n - |u_i|}$ is seen as an element of $\Sigma^{\Z_n}$, and $\sigma$ is the cyclic shift. For example, for $u_1 = 101$, the set $\{u_1\}$ is not $4$-safe, since $1010 = u_i 0 = \sigma^2(u_i 0)$, but it is $5$-safe. In general, if $u_i$ are distinct words that begin and end with nonzero symbols, then $U = \{u_1, \ldots, u_n\}$ is $n_0$-safe for any $n_0 \geq 2\max_i |u_i| - 1$, in particular if all the $u_i$ have just one letter, $\{u_1, \ldots, u_n\}$ is $1$-safe.

\begin{definition}
Let 
$X = \bigcup_{n \in \Z_+} \Sigma^{\Z_n}$. Define $G$ to be the smallest group of permutations on $X$ satisfying the following:
\begin{itemize}
\item $G$ contains all (permutations corresponding to) elements of $\Aut_0(\Sigma^\Z)$, applied to elements of $\Sigma^{\Z_n}$ interpreted as $n$-periodic configurations, and 
\item for any $n_0$, any $n_0$-safe finite set $\{u_1, \ldots, u_k\} \subset \Sigma^+$, permutation $\pi \in S_k$ and $n_1, \ldots, n_k \in \Z$, $G$ contains the map that acts trivially on $\Sigma^{\Z_n}$ with $n < n_0$, and on $\Sigma^{\Z_n}$ with $n \geq n_0$ acts by
\[ \sigma^j(u_i 0^{n - |u_i|}) \mapsto \sigma^{j + n_i}(u_{\pi(i)} 0^{n - |u_{\pi(i)}|}). \]
for all $j$ (and trivially on words not of this form).
\item for any $n$, $G$ contains all permutations of $\Sigma^{\Z_n}$ (that do not modify any other elements).
\end{itemize}
\end{definition}

Note that $G$ fixes the sets $\Sigma^{\Z_n}$, and so is obviously residually finite. We often refer to the elements of $\Sigma^{\Z_n}$ as \emph{tapes}. Note that the first item gives an embedding of $\Aut_0(\Sigma^\Z)$ in $G$, because periodic points are dense in $\Sigma^\Z$. We write $H$ for the image of this embedding.

Elements of the form described in the second item are called \emph{AFO} (from ``actions on finitely many orbits''). We refer to the numbers $n_i$ as \emph{offsets}. The elements in the third item are called \emph{initial permutations}.

\begin{example}
For example, pick $\Sigma = \{0, 1, 2\}, u_1 = 1, u_2 = 2, n_0 = 1, \pi = (1 \; 2), n_1 = 1, n_2 = -1$. Then the AFO $f$ corresponding to this data maps
\[ f(1) = 2, f(20) = 01, f(12) = 12, f(00010)  = 00200, f(001000100) = 001000100. \]
Usually, we do not give the data for AFOs explicitly, but simply explain in words which configurations are shifted or modified, and how.
\qee
\end{example}

AFOs are a priori quite different from automorphisms, as they can in a sense read arbitrarily many symbols of a configuration in order to decide whether or not to modify or shift it, while automorphisms admit local rules. More generally, it may seem that compactness is a fundamental obstacle to performing such feats. The trick around this is simple: we add new symbols that allow us to explicitly mark infinite all-zero tails.

\begin{lemma}
The embedding $\Aut_0(\Sigma^\Z) \to G$ is split.
\end{lemma}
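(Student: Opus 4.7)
The plan is to exhibit a normal subgroup $N \trianglelefteq G$ complementary to $\iota(\Aut_0(\Sigma^\Z))$, so that $G = N \rtimes \iota(\Aut_0(\Sigma^\Z))$, where $\iota$ denotes the inclusion from the first item of the definition of $G$. I will take $N$ to be the normal closure in $G$ of the set of all AFOs. Since AFOs together with $\iota(\Aut_0(\Sigma^\Z))$ generate $G$, and $N$ is normal, this already forces $G = N \cdot \iota(\Aut_0(\Sigma^\Z))$; the content is in showing $N \cap \iota(\Aut_0(\Sigma^\Z)) = \{e\}$.

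The invariant I will track is an ``orbit-support'' count: for $g \in G$ and $n \in \Z_+$, let $\omega_n(g)$ be the number of $\sigma$-orbits in $\Sigma^{\Z_n}$ on which $g$ acts non-trivially. Every generator of $G$ maps $\sigma$-orbits to $\sigma$-orbits (elements of $\iota(\Aut_0(\Sigma^\Z))$ because they commute with $\sigma$; AFOs by construction), and this property is preserved by products and inverses, so it holds for every $g \in G$. A direct check then gives $\omega_n(g_1 g_2) \leq \omega_n(g_1) + \omega_n(g_2)$ (since a point moved by $g_1 g_2$ must be moved by $g_2$ or, failing that, by $g_1$) and $\omega_n(h g h^{-1}) = \omega_n(g)$ (since $h$ induces a bijection on $\sigma$-orbits of $\Sigma^{\Z_n}$). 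By the definition of an AFO, each AFO satisfies $\omega_n \leq k$ uniformly in $n$, where $k$ is the cardinality of its defining word set. Hence $\mathcal{B} := \{g \in G : \sup_n \omega_n(g) < \infty\}$ is a normal subgroup of $G$ containing every AFO, and therefore $N \leq \mathcal{B}$.

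The remaining step, which I expect to be the main conceptual point, is to show that every non-identity element of $\iota(\Aut_0(\Sigma^\Z))$ lies outside $\mathcal{B}$. Given $\phi \neq \mathrm{id}$ in $\Aut_0(\Sigma^\Z)$ with a neighborhood of radius $r$, shift-commutation lets us choose $x \in \Sigma^\Z$ with $\phi(x)_0 \neq x_0$; setting $w := x_{-r} \cdots x_r$, the local rule ensures that every $y \in \Sigma^\Z$ with $y_{-r} \cdots y_r = w$ satisfies $\phi(y)_0 \neq y_0$. For $n \geq 2r+1$, there are exactly $|\Sigma|^{n-(2r+1)}$ such $y$ in $\Sigma^{\Z_n}$, all of which are non-fixed by $\iota(\phi)$, and they distribute into $\sigma$-orbits of size at most $n$. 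Therefore $\omega_n(\iota(\phi)) \geq |\Sigma|^{n-(2r+1)}/n$, which tends to infinity, so $\iota(\phi) \notin \mathcal{B}$. Combined with $N \leq \mathcal{B}$, this yields $N \cap \iota(\Aut_0(\Sigma^\Z)) = \{e\}$, and the semidirect product decomposition $G = N \rtimes \iota(\Aut_0(\Sigma^\Z))$ gives the desired splitting of the embedding.
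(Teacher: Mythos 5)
Your proof is correct, but it takes a genuinely different route from the paper's. The paper constructs the retraction directly: given $g \in G$ written as a product of automorphisms and AFOs, it observes that for large $n$ a random element of $\Sigma^{\Z_n}$ avoids the (polynomially small) supports of all the AFOs with high probability, so $g$ eventually agrees with a unique automorphism $g' \in \Aut_0(\Sigma^\Z)$ on a majority of inputs; the map $g \mapsto g'$ is then the retraction. You instead exhibit a normal complement: you take $N$ to be the normal closure of the AFOs and prove $N \cap \iota(\Aut_0(\Sigma^\Z)) = \{e\}$ by means of the invariant $\omega_n$, showing the normal subgroup $\mathcal{B} = \{g : \sup_n \omega_n(g) < \infty\}$ contains every AFO (hence $N$) while a nontrivial automorphism has $\omega_n \geq |\Sigma|^{n-(2r+1)}/n \to \infty$. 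Both arguments hinge on the same underlying asymmetry (AFOs touch $O(1)$ orbits per tape length, nontrivial automorphisms touch exponentially many), but your version makes the bookkeeping explicit through a conjugation-invariant, subadditive function, which gives a cleaner structural statement ($G = N \rtimes \iota(\Aut_0(\Sigma^\Z))$ with $N$ identified) at the cost of a slightly longer argument, whereas the paper's version is shorter but leaves the verification that $g \mapsto g'$ is a well-defined homomorphism more implicit. One small remark on your write-up: when bounding $\omega_n$ of an AFO you should note that for $n < n_0$ the AFO acts trivially so the bound $\omega_n \leq k$ is indeed uniform in $n$; you state it, and it is correct, but it is worth flagging since the AFO's definition explicitly changes behavior at the threshold $n_0$.
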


\begin{proof}
Recall that being split means there is a retraction, i.e.\ a homomorphism from $G$ to $\Aut_0(\Sigma^\Z)$ that maps trivially on the subgroup $\Aut_0(\Sigma^\Z)$. Consider a product $g$ of elements of $H$, AFOs and initial permutations. If we pick a large $n$ and a uniformly random element of $\Sigma^n$, then the initial permutations do not act, and with high probability none of the AFOs act (since the support of an AFO is of polynomial size in $n$).

In particular, for large enough $n$, $g$ eventually acts by a fixed automorphism in $H$ on more than half of the inputs. Such an automorphism $g'$ is obtained from the product representation of $g$ by dropping all the AFOs and initial permutations. It is also clear that for any other element $h \in G$, eventually it acts differently from $g'$ (and $g$) on almost all elements of $\Sigma^n$ (since with high probability we see all possible contents of the neighborhoods of both automorphisms). The retraction is then the map that takes $g$ to the unique $g'$ by which it eventually acts with probability greater than $1/2$.
\end{proof}

\begin{theorem}
\label{thm:GinG}
$G \in \mathcal{G}$.
\end{theorem}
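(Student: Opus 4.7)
The plan is to embed $G$ into $\Aut(\Gamma^\Z)$ for an enlarged alphabet $\Gamma \supsetneq \Sigma$ containing two fresh boundary markers $>$ and $<$ (plus an extra track over $\Sigma$ to carry a two-track conveyor belt). I will realize the action of $G$ on $X$ through a $G$-equivariant encoding $\phi : X \hookrightarrow \Gamma^\Z$ and simulate each generator of $G$ by an automorphism of $\Gamma^\Z$ whose local rule acts by the prescribed permutation on the image of $\phi$ and as the identity elsewhere.

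Concretely, a nonzero tape $w \in \Sigma^{\Z_n}$ is represented by a configuration with infinitely many $>$s on the left, a ``conveyor belt content'' in the middle, and infinitely many $<$s on the right. The conveyor belt content is built in the standard way: pick a canonical orbit representative $u\, 0^{n - |u|}$ of the cyclic word $w$ and fold it onto two tracks over $\Sigma$ so that the top track read forward, glued to the bottom track read backward, reproduces $w$; the all-zero tape is mapped to the uniform configuration with value $0$ everywhere. The crucial new ingredient, relative to the usual conveyor belt embedding, are the floating boundary walls $>$ and $<$: they allow a bounded-radius local rule to certify that a given nonzero block is all the content of some tape, without seeing the global cyclic structure.

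Each generator of $G$ is simulated as follows. For an automorphism $h \in \Aut_0(\Sigma^\Z)$, I apply $h$ to the two tracks of the conveyor belt via the standard two-track simulation; the $>$ and $<$ walls are fixed because $h$ fixes $0^\Z$. For an AFO with data $(\{u_1,\ldots,u_k\}, \pi, (n_i))$, the CA scans for an occurrence of some $u_i$ inside a conveyor belt region, flanked by enough $0$s and by the markers at the correct relative positions. When such a pattern is detected, the CA replaces $u_i$ by $u_{\pi(i)}$ (preserving the conveyor belt length, hence the tape length $n$) and then performs a conveyor belt rotation by $n_i$, a fixed-radius operation obtained as a composition of $|n_i|$ unit rotations. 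The $n_0$-safety hypothesis is exactly what ensures that the $u_i$ are unambiguously distinguishable inside a conveyor belt by a local rule.

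The main obstacle, and where the floating-boundary idea pays off, is making these simulated CAs both well defined on all of $\Gamma^\Z$ and respectful of composition in $G$. One must check that $\phi$ is injective and that different encoded tapes sit in well-separated shift orbits so that the local recognizers cannot interfere; that the ``identity off valid encodings'' convention yields shift-commuting bijections rather than merely partial maps; and that $g \mapsto \tilde g$ is a homomorphism, which can be verified on generators by observing that each AFO's action is localized inside a single encoded conveyor belt. The retraction constructed in the previous lemma provides a useful consistency check: on the simulated side, it corresponds to ignoring AFO modifications and applying only the $\Aut_0(\Sigma^\Z)$-part of a given product.
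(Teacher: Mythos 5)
You correctly identify the conveyor-belt simulation and the role of the markers $>$ and $<$, and you state the right slogan, namely that floating boundaries let a bounded-radius rule certify that a given nonzero block is the entire content of a tape. But the encoding you actually describe does not implement this. In your encoding the conveyor belt holds the \emph{whole} cyclic word $u\,0^{n-|u|}$ over $\Sigma^2$, with zeros encoded as $\mathbf{0}=(0,0)$, and the $>$s and $<$s are placed only on the far outside; the markers are therefore $\Theta(n)$ cells away from the nonzero block $u_i$, so an AFO at $u_i$ would have to verify ``the markers at the correct relative positions'' at unbounded distance, which no local rule can do. In the paper's construction it is the $>$- and $<$-symbols themselves that encode the zeros of the tape: a good run has the form ${>}^m u {<}^n$ where $u$ begins and ends in a nonzero symbol, the markers abut $u$, and they are rewritten (``float'') every time $H$ or an AFO changes the $0$-support. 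Only with this encoding does the AFO become local.

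The second and more serious gap is the convention that each simulated map ``acts by the prescribed permutation on the image of $\phi$ and as the identity elsewhere''. Membership in $\phi(X)$ is not locally detectable (it requires confirming infinitely many $>$s to the left and $<$s to the right), $\phi(X)$ is not closed, and it is not shift-invariant, so an ``identity off $\phi(X)$'' map cannot be a continuous shift-commuting bijection. You flag this as something to check, but it is the wrong design rather than a checkable detail. The paper dispenses with a fixed $G$-equivariant encoding $\phi$ entirely: each automorphism is defined on \emph{every} configuration by decomposing it into maximal good runs and specifying the action in four cases according to whether each side of the good run abuts a wall or an error symbol (only in the wall/wall case are the $>$/$<$-affixes rewritten, and only there does an AFO act). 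That case analysis, together with the retraction lemma and the even/odd trick used to pass from $2\Z_+$ to $\Z_+$, is what makes the construction both a genuine automorphism and an isomorphism onto its image, and none of it is present in your proposal.
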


\begin{proof}
We first perform the construction with $X = \bigcup_{n \in 2\Z_+} \Sigma^{\Z_n}$ (so we embed the group described exactly like $G$ above, but with $X$ only containing even-length tapes), and explain how to modify it for the union over $\Z_+$.

As our first step, we construct a new action of $\Aut_0(\Sigma^\Z)$ on a different full shift. As usual, we will use the idea of ``conveyor belts'' \cite{Sa18d} meaning we simulate the action of $\Aut_0(\Sigma^\Z)$ on paths that are wrapped as in Figure~\ref{fig:ConveyorBelt}, the point being that two pieces of tape with different orientations can be glued together into a continuous tape if we discover some kind of a problem.

The new idea is to use \emph{conveyor belts with floating boundaries}. This means that we will allow (though cannot quite force) the tape to contain explicit markings that tell us where the $0$-support of a configuration lies on the conveyor belt. Thus, in some situations we are able to locally detect that we are dealing with precisely a particular configuration $\sigma^j(u 0^{n-|u|})$ and not just a very good approximation of it.

\begin{figure}
\begin{tikzpicture}[scale=0.75]
\draw (0,0) grid (7,2);
\node () at (0.5,0.5) {0};
\node () at (1.5,0.5) {1};
\node () at (2.5,0.5) {0};
\node () at (3.5,0.5) {0};
\node () at (4.5,0.5) {1};
\node () at (5.5,0.5) {0};
\node () at (6.5,0.5) {1};
\node () at (6.5,1.5) {0};
\node () at (5.5,1.5) {0};
\node () at (4.5,1.5) {1};
\node () at (3.5,1.5) {0};
\node () at (2.5,1.5) {0};
\node () at (1.5,1.5) {1};
\node () at (0.5,1.5) {0};
\node () at (7.6,1) {$\approx$};
\draw[rounded corners=12] (8.2,-0.25)--(15.3,-0.25)--(15.3,2.25)--(8.2,2.25)--cycle;
\draw[rounded corners=4] (9,0.75)--(14.5,0.75)--(14.5,1.25)--(9,1.25)--cycle;
\draw (8.2,1) -- (9,1);
\draw (9.25,1.25) -- (9.25,2.25);
\draw (10.25,1.25) -- (10.25,2.25);
\draw (11.25,1.25) -- (11.25,2.25);
\draw (12.25,1.25) -- (12.25,2.25);
\draw (13.25,1.25) -- (13.25,2.25);
\draw (14.25,1.25) -- (14.25,2.25);
\draw (9.25,-0.25) -- (9.25,0.75);
\draw (10.25,-0.25) -- (10.25,0.75);
\draw (11.25,-0.25) -- (11.25,0.75);
\draw (12.25,-0.25) -- (12.25,0.75);
\draw (13.25,-0.25) -- (13.25,0.75);
\draw (14.25,-0.25) -- (14.25,0.75);
\draw (14.5,1) -- (15.3,1);
\node () at (8.7,0.37) {\rotatebox{-45}{0}};
\node () at (9.75,0.25) {\rotatebox{175}{1}};
\node () at (10.75,0.25) {\rotatebox{180}{0}};
\node () at (11.75,0.25) {\rotatebox{180}{0}};
\node () at (12.75,0.25) {\rotatebox{180}{1}};
\node () at (13.75,0.25) {\rotatebox{185}{0}};
\node () at (14.8,0.37) {\rotatebox{240}{1}};
\node () at (14.8,1.63) {\rotatebox{300}{0}};
\node () at (13.75,1.75) {\rotatebox{-5}{0}};
\node () at (12.75,1.75) {1};
\node () at (11.75,1.75) {0};
\node () at (10.75,1.75) {0};
\node () at (9.75,1.75) {\rotatebox{5}{1}};
\node () at (8.7,1.63) {\rotatebox{45}{0}};
\end{tikzpicture}
\caption{A word of length $n$ over $\Sigma^2$ with $\Sigma = \{0,1\}$ can be interpreted as a conveyor belt containing a word of length $2n$, by concatenating the word on the top track to the reversal of the word on the bottom track.}
\label{fig:ConveyorBelt}
\end{figure}
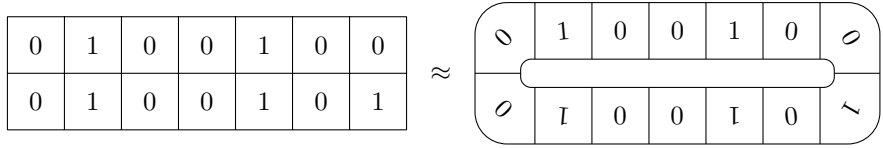



We use the alphabet $\Gamma = \Sigma^2 \cup \{<, >\}$. Write $\mathbf{0} = (0, 0), B = \Sigma^2, C = B \setminus \{\mathbf{0}\}$, where $\mathbf{0}$ is thought of as the zero symbol of $\Gamma$. We will refer to a symbol $>$ (resp.\ $<$) as a \emph{wall} if the symbol to its left (resp.\ right) is \textbf{not} $>$ (resp.\ $<$). A \emph{good subword} is any word in
\[ \{>>, {>}C, BB, C{<}, <<, ><\}. \]
Here and below we abuse notation, and by $BB$ denote any word in $BB = \{bb' \;|\; b, b' \in B\}$; and similarly for ${>}C$ and $C{<}$. Other words of length $2$ are called \emph{bad}; explicitly these are
\[ \{{<}{>}, B{>}, {<}B, {>}\mathbf{0}, \mathbf{0}{<}\}. \]

A symbol that is only part of good words (i.e.\ the two words of length two that contain it are both good) is called \emph{good}. If a symbol is part of a bad word $ab$, and it is not a wall then it is called \emph{error}. 



Let $f \in \Aut_0(\Sigma^\Z)$. We will describe the action of $f$ on doubly transitive points\footnote{Doubly transitive points are dense, so it suffices to describe the action on these points.} in such a way that it simulates the natural action of $\Aut_0(\Sigma^\Z)$ on encoded configurations. This action works as follows. No bad symbol (i.e.\ error or wall) is modified. Good symbols are part of maximal \emph{good runs}, i.e.\ words $v$ where all symbols are good. Explicitly, good runs are words of the form 
\[ {>}^* CB^*C {<}^* \cup {>}^* C {<}^* \cup {>}^* {<}^*, \]
and there are also requirements on the symbols around the word since the length-2 words on the boundaries must be good. 

We deal with such words differently depending on whether the symbols to the left and right are wall or error. Consider first the case where the symbols on the left and right are walls, i.e.\ the context is $a{>} \cdot {>}^m u {<}^n \cdot {<}b$ where $a \neq {>}$, $b \neq {<}$.

Our action of $\Aut_0(\Sigma^\Z)$ will rewrite the subword ${>}^m u {<}^n$. If $|u| = 0$, the action of $f$ is trivial. Otherwise, interpret the symbols $>$ and $<$ as $\mathbf{0}$s, and then read the resulting word $w = \mathbf{0}^m u \mathbf{0}^n$ as a pair of words $s, t \in \Sigma^*$ by separately reading the word on the top track and bottom track, i.e.\ $s_i = (w_i)_1, t_i = (w_i)_2$. 

Now apply $f$ to the periodic point $(st^R)^\Z$ to obtain a point $(s't'^R)^\Z$ with $|s'| = |s|, |t'| = |t|$. Then reverse the process by writing $w' =\! \begin{array}{c} s' \\ t' \end{array}$ (i.e.\ the word defined by $w'_i = (s'_i, t'_i)$) as $w' = \mathbf{0}^{m'} u' \mathbf{0}^{n'}$ with $m', n'$ maximal. Observe that the left- and rightmost symbols in $u'$ are nonzero, and $u'$ is not the empty word (since automorphisms cannot map nonzero points to the zero point), so this representation of the word is unique. Finally write $v' = {>}^{m'} u' {<}^{n'}$ back onto the tape. Note that the maximal good run remains the same, as the surrounding words $a{>}$ and ${<}b$ will not be modified (as all of their symbols are bad).


Next, suppose the symbol to the left of $v$ is an error, and the symbol to its right is a wall. The context is then $ab \cdot {>}^m u {<}^n \cdot {<}c$, where $c \neq {<}$ and $ab$ is one of the cases ${>} \mathbf{0}$, ${<}B$, $\mathbf{0} {<}$. In each case, we must have $m = 0$, as otherwise the leftmost $>$ would in fact be a wall, i.e.\ our situation is actually $ab \cdot u {<}^n \cdot {<}c$ with $u \in B^*$, and the last symbol of $u$ is nonzero, unless $u$ is the empty word.

If $ab = \mathbf{0}{<}$, then $u$ must in fact be the empty word. If $ab = {>} \mathbf{0}$, then $u$ cannot be the empty word, as then its first symbol ${<}$ of $u {<}^n$ is not good. If $ab = {<} \mathbf{0}$, the same is true. If $ab = {<}C$, then $u$ can be empty or nonempty.

Now, if $u$ is the empty word, we do not modify the word $u {<}^n$. Otherwise, we work exactly as above: we replace $<$-symbols by $\mathbf{0}$s (this time, there are no $>$-symbols); then we applying $f$ to the corresponding conveyor belt, then we rewrite the maximal $\mathbf{0}$-suffix as $<^*$. The difference to the previous case is that we leave the maximal $\mathbf{0}$-prefix intact and write any initial zeroes as zeroes back on the tape.

Observe that this means we write back a word ending in a nonzero symbol, followed by ${<}$ symbols (again because no nonzero point maps to the all-zero point under $f$), in particular we always write a symbol from $B$ to the right of $ab$, so all the symbols written on top of the good run will be good, since the surrounding bad symbols $ab$ and ${<}c$ are again not modified.

If there is an error on the right, and a wall on the left, then we work as above, up to obvious left-right-symmetry. If there is an error on both sides, then the situation is $ab \cdot u \cdot cd$ where $u \in B^*$. We work exactly as in the previous cases, but rewrite neither the prefix nor the suffix of $\mathbf{0}$s into $>$ or $<$ in the last step, and just write the $f$-image back on the tape as such.

All in all, our description leads to an action of $\Aut_0(\Sigma^\Z)$, by automorphisms. Bijectivity is clear, since we are simply applying bijective transformations to simulated configurations through a natural interpretation that stays consistent between applications, so $f$ and $f^{-1}$ will be mapped to their inverses under this construction. Shift-commutation is obvious, and for continuity one simply checks that to determine the new symbol at the origin one only has to read $O(1)$ elements to determine the local conveyor belt structure, and apply $f$ to the simulated configuration; determining the final $>$ and $<$-rewrites is equally local, since $f$ fixes the point $0^\Z$.

As for AFOs, in situations where the conveyor belt has walls on each side (the first case of the four we considered), we can locally detect the entire configuration. Thus, we are able to shift around and permute any finite set of configurations.


Finally, to replace $2\Z_+$ by $\Z_+$, we can simply add a preprocessing step where we replace the neighborhoods $N \subset \Z$ of elements of $\Aut_0(\Sigma^\Z)$ by $2N$, so that $\Aut_0(\Sigma^\Z)$ effectively acts on two independent copies of a full shift. In AFOs we can replace all the words $u_i$ by words $u_i'$ where $0$s have been inserted between any two symbols of $u_i$ and at the end, and double the values $n_0$ and $n_1, \ldots, n_k$. In initial permutations, we replace a permutation $\pi$ of $\Sigma^n$ by a permutation of $\Sigma^{2n}$ that acts trivially when the encoded configuration's support intersects both cosets of $2\Z_n$, and otherwise acts as $\pi$ on the word on the unique coset it intersects. 

The subgroup obtained from the above construction is isomorphic to $G$: consider the new action on good runs of length $n$, i.e.\ a conveyor belts simulating periodic configurations of length $2n$, possibly with $>$- and $<$-affixes. The $G$-orbits where both the odd and even cells contain nonzero symbols will simulate natural $\Aut_0(\Sigma^\Z)$-orbits since we will never detect the shift-orbits of any $u_i'0^{2n-|u_i'|}$ and we explicitly cancel the initial permutations, so by the previous lemma we get only the relations in $G$. If the odd positions contain only zeroes, then they continue to do so as elements of $H$, AFOs or initial permutations are applied, and we are precisely simulating the action of $G$ on a single tape of size~$n$. 
\end{proof}

\section{Pointy actions}

\begin{definition}
Let $H \leq \Aut_0(\Sigma^\Z)$. We say $H$ is \emph{pointy} if some $0$-finite configuration called the \emph{special point} $x_0 \in \Sigma^\Z$ has free orbit under $H$. 
We say $H$ is \emph{weakly pointy with special point} $x_0 = \ldots 000u000\ldots$ if 
the point stabilizer of $x_0$ is contained in the point stabilizer of $u0^{n - |u|}$ for large enough $n$ (for the natural action of $H$ on $\Sigma^{\Z_n}$).
\end{definition}


Note that a weakly pointy action is indeed a lot weaker than a pointy one, for example one can check that if $H$ is finitely generated and fixes the point $x_0$, then its action is weakly pointy for this special point. 

For a group $H \leq \Aut(\Sigma^\Z)$ and any point $x_0$, write $A \wr_{x_0} H$ for the permutational wreath product $A \wr_{\Omega} H$ where $\Omega = Hx_0$ and $H$ acts naturally on $\Omega$. Recall that such a wreath product is just the semidirect product of $A^\Omega \rtimes H$ where $h \in H$ acts on $y \in A^\Omega$ by $h y_{gx_0} = y_{h^{-1}gx_0}$. 

\begin{lemma}
\label{lem:WeakPointyWreath}
Suppose $H \leq \Aut_0(\Sigma^\Z)$ is weakly pointy with special point $x_0$. Then $A \wr_{x_0} H \in \mathcal{G}$ for all finitely generated abelian groups $A$.
\end{lemma}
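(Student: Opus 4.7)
The plan is to extend the alphabet and embed $A \wr_{x_0} H$ inside the group $G'$ that Theorem~\ref{thm:GinG} produces from the extension. Writing $A = \Z^s \times \prod_{j=1}^r \Z_{m_j}$, set $T = \{0,1\}^s \times \prod_j\{0,1,\ldots,m_j\}$ with zero $\mathbf{0}$, and take $\Sigma' = \Sigma \times T$ with zero $(0,\mathbf{0})$. Lift $h \in H$ to $h' \in \Aut_0((\Sigma')^\Z)$ by acting by $h$ on the $\Sigma$-track and as the identity on the $T$-track; call the image $H'$. Since $h'$ preserves the $T$-track, the stabilisers transfer cleanly: $\mathrm{Stab}_{H'}(x_0') = \mathrm{Stab}_H(x_0)$ for $x_0' = (x_0,\mathbf{0}^\Z)$, and weak pointyness is inherited verbatim. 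Theorem~\ref{thm:GinG} applied to $\Sigma'$ and $H'$ yields a group $G' \in \mathcal{G}$ containing $H'$ together with all AFOs on $(\Sigma')^+$.

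For the generators of $A$ I would use decorations on the $T$-track supported only near position $0$ of $u$, with disjoint tracks for different generators. Concretely, for $i \leq s$ let $\alpha_i$ be the single-word AFO that shifts by $1$ the orbit of the length-$|u|$ word whose $\Sigma$-part is $u$ and whose $T$-part has $1$ at position $0$ of the $i$-th $\{0,1\}$-track and $\mathbf{0}$ elsewhere; for $j \leq r$ let $\beta_j$ be the $m_j$-word AFO that cyclically permutes the orbits of the length-$|u|$ words with $\Sigma$-part $u$ and the symbol $\ell \in \{1,\ldots,m_j\}$ at position $0$ of the $(s+j)$-th track (and $\mathbf{0}$ elsewhere). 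Then $\alpha_i$ has infinite order (its order on $(\Sigma')^{\Z_n}$ is the orbit length $n$) and $\beta_j$ has order $m_j$. Since every generator decorates a distinct track and none touches the all-$\mathbf{0}$ $T$-orbit, they act on pairwise disjoint shift-orbits; hence they commute and generate an isomorphic copy of $A$.

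The remaining point is that $\langle \{\alpha_i,\beta_j\} \cup H'\rangle \cong A \wr_{x_0} H$, and everything reduces to the identity $\mathrm{Stab}_{H'}(g) = \mathrm{Stab}_H(x_0)$ for each generator $g$. If $h \in \mathrm{Stab}_H(x_0)$, weak pointyness gives $h \in \mathrm{Stab}_H(u 0^{n-|u|})$ for all large $n$, and because $h'$ fixes the $T$-track it fixes each decorated word pointwise, hence fixes each decorated shift-orbit pointwise, so $h'gh'^{-1} = g$. Conversely, if $h'gh'^{-1}=g$ then for each $n$ the $h'$-image of every decorated shift-orbit must equal itself; because $h'$ leaves the $T$-track alone and the decoration is a single mark at position $0$, the matching shift can only be $0$, forcing $h \in \mathrm{Stab}_H(u0^{n-|u|})$, and continuity of $h$ together with $u 0^{n-|u|} \to x_0$ gives $h \in \mathrm{Stab}_H(x_0)$. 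The same mark-pinning argument shows that for $h \notin \mathrm{Stab}_H(x_0)$ the decorated orbits moved by $g$ and by $h'gh'^{-1}$ are disjoint in $(\Sigma')^{\Z_n}$ for cofinitely many $n$ (and for the exceptional finitely many $n$ the two AFOs coincide), hence commute; applied to pairs $h_1, h_2$ in distinct cosets of $\mathrm{Stab}_H(x_0)$, the same argument gives genuinely distinct conjugate AFOs and rules out collapsing relations. The retraction $G' \to \Aut_0((\Sigma')^\Z)$ from the preceding lemma kills the AFO part and so rules out any residual relation mixing $H'$ with the $A^\Omega$-subgroup, and assembling these ingredients identifies $\langle \{\alpha_i,\beta_j\} \cup H'\rangle$ with the permutational wreath product. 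The main delicacy I expect is this gap between the finite-$n$ picture and the limiting stabiliser condition defining weak pointyness; it is exactly resolved by the continuity of the automorphisms in $H$.
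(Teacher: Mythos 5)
Your proof is correct and follows essentially the same construction as the paper: append an auxiliary ``decoration'' track to the alphabet, lift $H$ to act only on the $\Sigma$-track, and use AFOs pinned at position $0$ of $u$ to realise the $A$-generators, with the free-versus-finite tension resolved by the cutoff $n_0$ and the stabiliser condition. Two cosmetic differences are worth noting. First, the paper splits $A$ into a finite part and $\Z$-factors (via $(N \times K)\wr_\Omega H \leq (N\wr_\Omega H)\times(K\wr_\Omega H)$) and handles each with a single extra track, whereas you package all factors into one product track $T$ and one application of Theorem~\ref{thm:GinG}; both are fine. Second, for injectivity the paper invokes the factor map $A \wr_{x_0} H \to A \wr_{u0^{n-|u|}} H$ coming from weak pointyness, composed with the faithful action on $\bigcup_n (\Sigma')^{\Z_n}$; you instead redo that argument by hand (retraction to kill the $H'$-part, then mark-pinning plus continuity to separate the conjugated AFOs on large tapes). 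Your hands-on version is correct and arguably makes more visible where weak pointyness and the continuity of local rules each enter, but it is the same argument unpacked rather than a different route.
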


\begin{proof}
It suffices to show $A \wr_{x_0} H \in \mathcal{G}$ for finite abelian $A$ and infinite cyclic $A$, since $(N \times K) \wr_{\Omega} H \leq (N \wr_{\Omega} H) \times (K \wr_{\Omega} H)$. We start with the case of a finite $A$, whose group operations we will write additively. 


Suppose $H$ acts weakly pointily with special point $x_0 = \ldots 000 . u 000 \ldots$. Take alphabet $\Sigma \times A$ with zero element $(0, 0_A)$, and have $H$ act on the first track, ignoring the second. Now consider the subgroup of $G$ generated by $H$, and the AFOs that take $(u0^{n-|u|}, a0^{n-1})$ to $(u0^{n-|u|}, (a + a')0^{n-1})$ for various $a'$, for all $n \geq n_0$ (but do nothing else).

We show that these maps form a copy of $A \wr_{x_0} H$ inside $G$. Observe that this abstract group is generated by $H$ and generators of $A$ applied ``at'' (approximations of) $x_0$, and the conjugation action of $H$ is to move the points by its natural action (carrying with them the element of $A$ that has been applied on each point).

First, consider $G$ as embedded in the automorphism group of a full shift according to the proof of Theorem~\ref{thm:GinG}. Consider tapes where $x_0$ appears in the form ${>}^{-\N} ((u, a0^{|u|-1}), 0^{|u|}) {<}^{\N}$. Clearly on such points we are simulating in a concrete way this abstract action of the group: $H$ literally acts by its natural action on the top track, and the generators for the bottom $A$ (the AFOs) add $a'$ to the unique symbol on the bottom track, if the top track contains precisely the configuration $x_0$. Thus, the group we have constructed inside $G$ admits an epimorphism onto $A \wr_{x_0} H$.

Then again consider $G$ abstractly. By a similar analysis as in the previous paragraph, the action of the subgroup of $G$ we constructed, on the set $\Sigma^{\Z_n}$, is just the natural action of $A \wr_{u 0^{n-|u|}} H$. By the assumption on stabilizers, the action $H \curvearrowright O_H(u 0^{n-|u|})$ is a factor of $H \curvearrowright O_H(x_0)$ (by mapping $h \cdot x_0 \mapsto h \cdot u0^{n-|u|}$. Thus $A \wr_{u 0^{n-|u|}} H$ is a factor of $A \wr_{x_0} H$ (since $A$ is abelian). We conclude that $A \wr_{x_0} H$ factors onto the permutation groups by which $G$ acts on $\Sigma^{\Z_n}$.


For $\Z \wr_{x_0} H$, we do the same, but now instead of acting on $\Sigma \times A$ we act on $\Sigma \times \{0,1\}$. We use the natural action of $H$ on the first track, and then in $G$ we consider the subgroup containing $H$ and the AFO that shifts points from $O((u0^{n-|u|}, 10^{n-1}))$ by offset $1$, and fixing other points.

Again, on infinite conveyor belts (in the sense of the embedding of $G$ into the automorphism group of a full shift) we have the natural action of $\Z \wr_{x_0} H$: when shifting on the base group $\Z$ we shift a unique bit on the second track. As a side-effect we shift the entire configuration, but the action of $H$ commutes with the shift so we can ignore this and only keep track of the movement on the second track: the total shift on the first track is invisible to the action of $H$, and its total shift exactly copies the total shift of the second track.

On finite conveyor belts, by a similar analysis we have an action of the group $\Z_n \wr_{u 0^{n-|u|}} H$, which is a factor of $\Z \wr_{x_0} H$ by the assumption on stabilizers.
\end{proof}

\begin{lemma}
\label{lem:PointyWreath}
Suppose $G \in \mathcal{G}$ admits a pointy action. Then $A \wr G \in \mathcal{G}$ for all finitely-generated abelian groups $A$.
\end{lemma}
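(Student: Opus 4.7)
The plan is to deduce this as a direct corollary of Lemma~\ref{lem:WeakPointyWreath}, so essentially no new work is needed beyond observing how the two hypotheses compare and how the permutational wreath product simplifies when the chosen orbit is free.

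First, I would observe that a pointy action is in particular a weakly pointy action with the same special point. Indeed, realize $G$ as $H \leq \Aut_0(\Sigma^\Z)$ via the given pointy action, so that the special point $x_0 = \ldots 000 . u 000 \ldots$ has free $H$-orbit. Then the point stabilizer of $x_0$ in $H$ is trivial, and is therefore vacuously contained in the stabilizer of $u 0^{n-|u|}$ for the natural action on $\Sigma^{\Z_n}$, for every $n \geq |u|$. So the hypotheses of Lemma~\ref{lem:WeakPointyWreath} are met with this $H$ and $x_0$, giving $A \wr_{x_0} H \in \mathcal{G}$ for every finitely generated abelian $A$.

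Next, I would note that whenever the $H$-orbit $\Omega := H x_0$ is free, the orbit map $h \mapsto h \cdot x_0$ is an $H$-equivariant bijection from $H$ (with its left-regular action on itself) onto $\Omega$. It therefore induces an isomorphism $A^{\Omega} \cong A^H$ intertwining the two coordinate-permutation actions of $H$, and hence an isomorphism of abstract groups
\[ A \wr_{x_0} H \;=\; A^{\Omega} \rtimes H \;\cong\; A^{H} \rtimes H \;=\; A \wr H \;=\; A \wr G. \]
Combining this identification with the previous paragraph yields $A \wr G \in \mathcal{G}$, as desired.

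There is no real obstacle to overcome at this stage: all the genuine content has been packaged into Theorem~\ref{thm:GinG} and Lemma~\ref{lem:WeakPointyWreath}. This lemma records only the two routine facts that (i) a free action has trivial stabilizers, so pointiness is a strengthening of weak pointiness, and (ii) on a free orbit the permutational wreath product coincides with the ordinary restricted wreath product.
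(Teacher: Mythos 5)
Your proposal is correct and follows exactly the paper's proof: observe that a pointy action is weakly pointy (trivial stabilizer is vacuously contained in any stabilizer), invoke Lemma~\ref{lem:WeakPointyWreath}, and identify $A \wr_{x_0} G$ with $A \wr G$ because the orbit is free. You have merely spelled out the two routine verifications that the paper's one-line argument leaves implicit.
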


\begin{proof}
A pointy action is weakly pointy, and we have $A \wr_{x_0} G \in \mathcal{G}$. Since the orbit is free, this is simply the group $A \wr G$. 
\end{proof}

\section{Examples of pointy actions}

\begin{lemma}
The set of groups admitting pointy actions is closed under direct products, subgroups, and commensurability.
\end{lemma}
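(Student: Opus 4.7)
The plan is to verify the three closure properties one at a time; subgroups and direct products are routine, and commensurability is the only step that requires real argument.

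For subgroups, if $H \hookrightarrow \Aut_0(\Sigma^\Z)$ is pointy with special point $x_0$ and $K \leq H$, then the restriction to $K$ still has $x_0$ as a $0$-finite point whose $K$-orbit, being contained in the free $H$-orbit, is free.

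For direct products, given pointy actions $H_i \hookrightarrow \Aut_0(\Sigma_i^\Z)$ with special points $x_i$ for $i=1,2$, I would use the product alphabet $\Sigma_1 \times \Sigma_2$ with zero $(0,0)$, together with the natural track-by-track embedding $\Aut_0(\Sigma_1^\Z) \times \Aut_0(\Sigma_2^\Z) \hookrightarrow \Aut_0((\Sigma_1 \times \Sigma_2)^\Z)$. The configuration $y_0$ whose two tracks are $x_1$ and $x_2$ is $0$-finite, and $(h_1,h_2)\cdot y_0 = y_0$ forces $h_i \cdot x_i = x_i$ on each track, so $h_1=h_2=e$ by freeness.

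For commensurability I would use the definition that $H$ and $H'$ are commensurable iff they share a common finite-index subgroup; combined with the subgroup case, this reduces the statement to one finite-index supergroup step: if $K \hookrightarrow \Aut_0(\Sigma^\Z)$ is pointy with special point $x_0$ and $[H':K]=n<\infty$, then $H'$ is pointy. For this step I would form the composite
\[ H' \hookrightarrow K \wr S_n \hookrightarrow \Aut_0(\Sigma^\Z) \wr S_n \hookrightarrow \Aut_0((\Sigma^n)^\Z), \]
where the first map is the standard permutation representation of $H'$ on the $n$ cosets of $K$ (taking coset representatives $g_1=e,g_2,\ldots,g_n$), and the last is the obvious embedding in which $S_n$ permutes the $n$ tracks and clearly fixes the all-zero configuration. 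Take $y_0 = (x_0,0^\Z,\ldots,0^\Z)$ with $x_0$ on track $1$.

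The main obstacle, and really the only nontrivial verification, is showing that $y_0$ has free $H'$-orbit. Writing $g \in H'$ as $((k_1(g),\ldots,k_n(g)),\pi_g) \in K \wr S_n$, if $g\cdot y_0 = y_0$ then $\pi_g$ must fix track $1$ (the unique nonzero track of $y_0$), which translates to $gK=K$, i.e.\ $g\in K$; once $g \in K$, its track-$1$ component is just $g$ acting via the original pointy action, so $g\cdot x_0 = x_0$, and freeness of the $K$-orbit forces $g=e$. The full commensurability closure then follows: if $H$ is pointy and $H'$ is commensurable to $H$, a common finite-index subgroup $K$ inherits pointiness from $H$ via the subgroup case, and $H'$ inherits it from $K$ via this supergroup step.
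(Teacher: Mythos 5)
Your proposal is correct and takes essentially the same approach as the paper. For commensurability, the paper likewise reduces to the finite-index supergroup case via subgroup closure, invokes the Krasner--Kaloujnine embedding $H' \hookrightarrow K \wr S_n$, and implements $K^n$ and $S_n$ on the $n$ tracks; the only difference is that the paper dismisses the freeness check with ``it is easy to keep the action free,'' whereas you make the needed choice $y_0 = (x_0, 0^\Z, \ldots, 0^\Z)$ explicit (note that the naive product point $(x_0,\ldots,x_0)$ would \emph{not} work) and verify that a stabilizing $g$ must fix track $1$, hence lie in $K$, hence be trivial.
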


\begin{proof}
For direct products, we use different tracks, one for each group, and the action $(g, h)(x, y) = (gx, hy)$. As the special point $x_0$ we use the pair of special points of the two actions, one on each track.

For subgroups, obviously for any pointy action, the subaction of any subgroup is also pointy.

For commensurability, since we have closure under subgroups, it suffices to show that if $G$ is of finite index in $H$ and $G$ admits a pointy action, then so does $H$. 
The Krasner–Kaloujnine universal embedding theorem states that $H$ embeds in $G \wr S_n$ for some $n$ ($S_n$ is the symmetric group on $n$ points). Since we have direct products, we have a pointy action of $G^n$, and we can combine this with the action of swapping the contents of various tracks to implement the action of $S_n$ (it is easy to keep the action free).
\end{proof}


\begin{lemma}
Every countable free group admits a pointy action.
\end{lemma}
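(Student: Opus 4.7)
My plan is a reduction to $F_2$ followed by an explicit construction. By the closure under subgroups proved in the previous lemma, and since every countable free group embeds in $F_2$ (for instance the commutator subgroup $[F_2, F_2]$ is a free group of countably infinite rank), it suffices to exhibit a pointy action of $F_2 = \langle a, b\rangle$.

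I would use the alphabet $\Sigma = \{0, a^+, a^-, b^+, b^-, \$\}$, with $0$ as the zero symbol, and view $0$-finite configurations of the form $\ldots 000 \$ w \$ 000 \ldots$ (with $w$ a possibly empty reduced word over $\{a^\pm, b^\pm\}$) as encoding the corresponding element of $F_2$. The special point would be $x_0 = \ldots 000 \$\$ 000 \ldots$, encoding the identity. I would then define automorphisms $\alpha, \beta \in \Aut_0(\Sigma^\Z)$ whose action on an encoded configuration prepends $a^+$, respectively $b^+$, to $w$, with free reduction when the first letter of $w$ is the corresponding inverse. Concretely, $\alpha$ moves the left $\$$ one cell to the left and writes $a^+$ in the old marker position when the cell immediately to the right of the left $\$$ is not $a^-$, and otherwise deletes that $a^-$ by moving the left $\$$ one cell to the right; the inverse $\alpha^{-1}$ is defined symmetrically by swapping the roles of $a^+$ and $a^-$, and likewise for $\beta$. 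Each rule depends only on a bounded window around the left marker, so these define genuine reversible cellular automata fixing $0^\Z$; off encoded configurations the generators act trivially.

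The orbit of $x_0$ under $\langle \alpha, \beta\rangle$ is then precisely the set of encodings of elements of $F_2$, and distinct reduced words yield distinct configurations, so this orbit is free; in particular $\langle \alpha, \beta\rangle \cong F_2$ and the action is pointy. The only subtle point I expect is the boundary behaviour on the pattern $\$\$$ (encoding the empty word) and the symmetric situation where a reduction empties $w$; this is a bookkeeping matter for the local rule, and is where care is needed to keep the map well-defined and reversible, but not a fundamental obstacle.
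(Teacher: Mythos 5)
Your reduction to $F_2$ via closure under subgroups is correct and matches the first step of the paper's proof. However, the explicit $\alpha, \beta$ you sketch are not reversible cellular automata, and I don't think this can be dismissed as bookkeeping: the map you describe fails to be injective. Take $x = \ldots 0\,\$\,a^+\,0\,\$\,a^+\,0 \ldots$ and $z = \ldots 0\,\$\,a^+\,\$\,a^+\,a^+\,0 \ldots$ (zeros elsewhere, aligned at the first $\$$). Since the only local criterion available for ``this $\$$ is a left marker'' is that it has a $0$ to its left (and perhaps a non-$0$ to its right), both $\$$'s in $x$ qualify, and applying your rule sends both $x$ and $z$ to the same configuration $\ldots 0\,\$\,a^+\,a^+\,\$\,a^+\,a^+\,0 \ldots$. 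The underlying issue is that one cannot locally verify that a given $\$$ is really the left marker of a complete, well-formed encoding, and your rule can produce patterns in which a symbol that was a left marker no longer looks like one, so the intended inverse has no way to undo the move. This is exactly the obstruction that the conveyor-belt trick (used throughout this paper, with the floating-boundary variant in Theorem~\ref{thm:GinG}) was designed to overcome: a head meeting an unexpected boundary wraps onto a second track instead of erasing information, preserving local reversibility. The paper's own proof sidesteps explicit construction entirely: after reducing to $F_2$, it invokes closure under commensurability together with Nielsen's theorem to pass to a finite free product of finite groups, and then cites Alperin's construction of an $\Aut(\Sigma^\Z)$-action for such groups, observing that the orbit of the configuration $\ldots 111{*}111\ldots$ in that action is free, i.e.\ the action is pointy.
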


\begin{proof}
It suffices to consider the free group on two generators. Free nonamenable groups on finitely many generators are pairwise commensurable, and by a theorem of Nielsen \cite[Theorem~2]{Ly73} they are the same as finite free products of finite groups up to commensurability, so it suffices to consider finite free products of finite groups.

Alperin constructs an action for such groups in \cite{Al88}. His proof in fact shows that this action is pointy. Specifically, the action on $\ldots111{*}111\ldots$ is free (i.e.\ we take the identity element as the zero symbol). (One can equivalently use the construction from \cite{BoLiRu88} which is for free products of $\Z_2$s.)
\end{proof}


\begin{lemma}
Let $A$ be a finite abelian group. Then the group $A \wr \Z$ admits a pointy action.
\end{lemma}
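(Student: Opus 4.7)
The plan is to exhibit a standard lamplighter-style action of $A \wr \Z$ on a two-track full shift and observe that it is pointy at the obvious singleton-marker configuration. I would take the alphabet $\Sigma = A \times \{0,1\}$ with zero symbol $(0_A, 0)$, and define two kinds of shift-commuting automorphisms of $\Sigma^\Z$. First, the ``partial shift'' $\tau$ that fixes the first coordinate and shifts the second, $\tau(x)_i = (x^{(1)}_i, x^{(2)}_{i+1})$. Second, for each $a \in A$, the ``conditional addition'' $\alpha_a$ defined by $\alpha_a(x)_i = (x^{(1)}_i + a \cdot x^{(2)}_i, x^{(2)}_i)$, which adds $a$ on the top track precisely at positions where the bottom has a $1$. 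Both are reversible cellular automata of small radius fixing $0^\Z$, so they lie in $\Aut_0(\Sigma^\Z)$.

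The next step is to show that the subgroup they generate is isomorphic to $A \wr \Z$. The map $a \mapsto \alpha_a$ is clearly an injective homomorphism $A \hookrightarrow \Aut_0(\Sigma^\Z)$. A direct computation shows that the conjugate $\beta_{k,a} := \tau^k \alpha_a \tau^{-k}$ is the automorphism that adds $a$ to the first track at position $i$ precisely when $x^{(2)}_{i-k} = 1$, giving $\tau \beta_{k,a} \tau^{-1} = \beta_{k+1, a}$. This realizes the $\Z$-action on $\bigoplus_\Z A$ by translation. To see that the family $\{\beta_{k,a}\}$ is faithful, i.e.\ generates a copy of $\bigoplus_\Z A$ with no extra relations, I would evaluate a finite product $\prod_k \beta_{k, a_k}$ on test points whose second track is $\delta_\ell$: the new value at position $i$ on the top track becomes $a_{i - \ell}$, so triviality forces all $a_k = 0$. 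Combined with the fact that $\tau$ has infinite order and does not preserve the second track while the $\beta_{k,a}$ do, we get the desired semidirect product structure.

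Finally, for pointiness I would take the special point $x_0$ with first track identically $0_A$ and second track equal to $\delta_0$ (a single $1$ at the origin, zeros elsewhere); this point has only the single nonzero symbol $(0_A, 1)$ at position $0$, hence is $0$-finite. Writing a general element of $A \wr \Z$ in normal form $g = \tau^m \prod_k \beta_{k, a_k}$, applying it to $x_0$ first adds $a_k$ on the top track at each position $k$ (since $x_0^{(2)} = \delta_0$ means the only $i$ with $x_0^{(2)}_{i-k}=1$ is $i=k$), and then $\tau^m$ shifts the bottom marker to position $-m$. The resulting configuration equals $x_0$ only if $m = 0$ and all $a_k = 0$, so the orbit is free.

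The argument is essentially a routine verification once the two generators are written down; the only place where care is required is (iii)-style faithfulness of the $\beta_{k,a}$, but the test-point trick above takes care of it with a single line of computation. No genuine obstacles arise, which matches the paper's earlier remark that lamplighter-type groups with finite base are ``already pointy''.
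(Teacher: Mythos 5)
Your proposal is correct and is essentially the construction the paper uses: the paper cites \cite{Sa20c} for the same lamplighter action (with track labels swapped, the marker being on the first track and the lamps on the second) and takes the analogous special point with a single marker and all lamps off. You simply spell out the verification of faithfulness and freeness that the paper leaves implicit.
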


\begin{proof}
The action constructed for these groups in \cite{Sa20c} (explained also in the example below) is easily seen to be pointy. The orbit of the configuration $\begin{array}{c}\ldots 0001000 \ldots \\ \ldots 0000000 \ldots\end{array}$ is free.
\end{proof}

\begin{example}
\label{ex:ZZ2Z}
We illustrate the embedding of $\Z \wr (\Z_2 \wr \Z)$ obtained by combining Lemma~\ref{lem:PointyWreath} with the previous lemma. We will refer to $\Z_2 \wr \Z$ as the \emph{top lamplighter}, and the $\Z$ on the left as the \emph{bottom $\Z$}. The generators of the top lamplighter will be called $L, R$ (left and right; the generators corresponding to a symmetric generating set for the $\Z$ quotient) and $F$ (flip; the generator of $\Z_2$ at $0_\Z$), and $U, D$ (up and down) are the generators for the bottom $\Z$ at the identity of $\Z_2 \wr \Z$.

We use the usual action of $\Z_2 \wr \Z$ on $(\{0,1\}^2)^\Z$ (see e.g.\ \cite{Sa20c}): the generator of $\Z$ shifts the first track, and the generator of $\Z_2$ (at $0_\Z$) sums the first track to the second modulo $2$. 

The way we implement $\Z \wr (\Z_2 \wr \Z)$ in $G$ is now that we take the above copy of $\Z_2 \wr \Z$ in $H$. Then we modify its action according to the proof of Theorem~\ref{thm:GinG} so that it acts on conveyor belts with floating boundaries. The generator $D$ for the bottom $\Z$ (at the origin of top lamplighter) is implemented by shifting the word $(1,0,1)$ (seen as a word of length $1$, over the alphabet $\{0,1\}^3$) by $1$, i.e.\ the AFO data is $u_1 = (1,0,1)$, offset $n_1 = 1$, safety threshold $n_0 = 1$; $U$ is its inverse.

We illustrate the action of this group in Figure~\ref{fig:SpacetimeDiagram}, by computing a spacetime diagram for the element $(FL)^3 \cdot ULUFRD^4LFR$, in the sense of showing how partial applications of this product modify a particular configuration (which we have chosen so that it illustrates some of the relevant phenomena). The symbols $>, <$ are directly written as themselves, while we write an element of $(\{0,1\}^3)^2$ as a stack of three colored boxes, the top three boxes corresponding to the first $\{0,1\}^3$ (the top track) and the bottom three boxes corresponding to the latter $\{0,1\}^3$ (the bottom track). The fill color white corresponds to $0$, and all other colors correspond to $1$.

There are three good runs visible. On the leftmost the top lamplighter configuration corresponds to the identity, so $ULUFRD^4LFR$ adds $1$ to the bottom copy of $\Z$, and then $(FL)^3$ flips a few more bits on the top lamplighter side. In the middle conveyor belt, the top lamplighter is in state $LFR$, so the initial $ULUFRD^4LFR$ instead subtracts $4$ from the bottom copy of $\Z$. Note that now $(FL)^3$ moves to the right, being on the bottom tape.

On the third tape we just see the action of $(FR)^3(LFR)^2$ of the top lamplighter for three reasons: there is an error on the right (so the right end does not float), there are two top lamplighters (i.e.\ $1$s on the simulated top track), and there are two $1$s on the simulated bottom track (corresponding to the bottom $\Z$). Note that between the second and third good run there is also a zone with only errors, which is never modified. \qee
\end{example}

\begin{figure}
\begin{tikzpicture}[scale=0.5]
\input{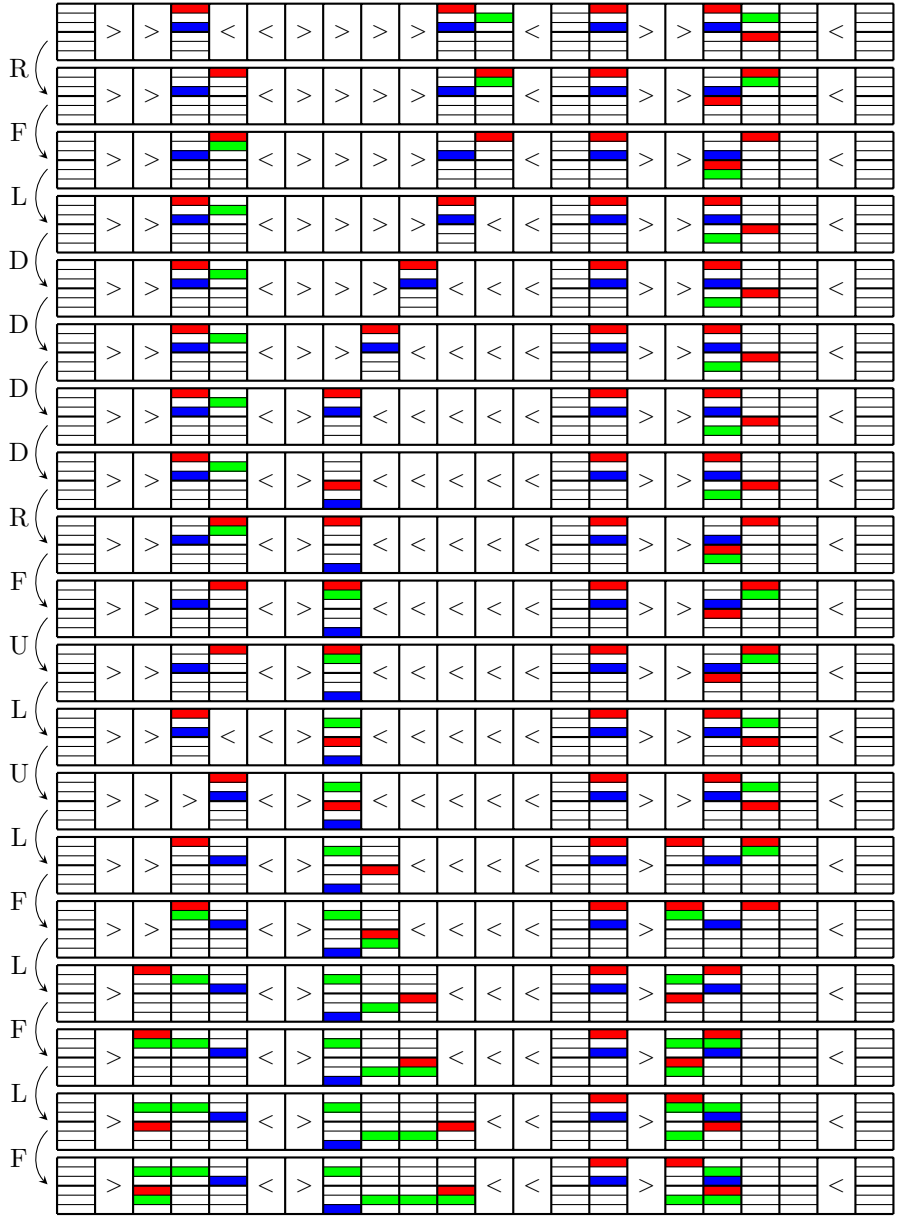}
\end{tikzpicture}
\caption{A spacetime diagram for the element $(FL)^3 \cdot ULUFRD^4LFR$. }
\label{fig:SpacetimeDiagram}
\end{figure}


The Python script at \cite{Sa23} implements the action of $\Z \wr (\Z_2 \wr \Z)$. When run, it generates Tikz code for the figure in the example.



The following is immediate from the closure properties and constructions in this section. Combining it with Lemma~\ref{lem:PointyWreath} immediately yields the results mentioned in the introduction.

\begin{lemma}
\label{lem:PointyConstructions}
If $B$ is a finite abelian group, $C$ is a finitely-generated free group, and $n \in \N$, then $A \wr ((B \wr \Z)^n \times C^n)$ is in $\mathcal{G}$.
\end{lemma}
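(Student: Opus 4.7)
The plan is to just assemble the pieces. The hypothesis of Lemma~\ref{lem:PointyWreath} is that the top group $G$ admits a pointy action, and the conclusion is precisely that $A \wr G \in \mathcal{G}$ for every finitely-generated abelian group $A$. So it suffices to prove that $G := (B \wr \Z)^n \times C^n$ admits a pointy action.

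For this, I would invoke three facts already established above. First, by the lemma that every finite abelian group $B$ gives a pointy action of $B \wr \Z$, each factor $B \wr \Z$ admits a pointy action. Second, every finitely-generated free group admits a pointy action, so each factor $C$ does. Third, the class of groups admitting a pointy action is closed under direct products. Applying this closure repeatedly to the $2n$ factors $B \wr \Z, \ldots, B \wr \Z, C, \ldots, C$ shows that $G$ itself admits a pointy action, as required.

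Then Lemma~\ref{lem:PointyWreath} directly gives $A \wr G \in \mathcal{G}$, which is the statement of the lemma. There is no real obstacle here: the whole point of isolating the closure lemma and the pointy-action lemmas was to reduce this final statement to bookkeeping, and the main technical work (the floating-boundary conveyor belt construction of Theorem~\ref{thm:GinG}, transferred to wreath products via Lemma~\ref{lem:PointyWreath}) has already been done.
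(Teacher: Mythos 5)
Your proposal is correct and is essentially the paper's own argument; the paper simply remarks that the lemma is immediate from the closure properties and pointy-action constructions in that section together with Lemma~\ref{lem:PointyWreath}, and your write-up just spells out that bookkeeping (noting that $B \wr \Z$ and free groups admit pointy actions, closure under direct products gives a pointy action of $(B \wr \Z)^n \times C^n$, and then Lemma~\ref{lem:PointyWreath} applies).
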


\section{A simpler construction that gives $\Z \wr \Z$ and some other groups}
\label{sec:Iteration}



We give here a simpler variant of the construction from Theorem~\ref{thm:GinG}, which does nothing more than marks individual positions on conveyor belts (and has no explicit conveyor belts). In Section~\ref{sec:Solvable}, we give an algebraic description of some solvable groups obtained from, in a sense, iterating Lemma~\ref{lem:WeakPointyWreath} for this construction, illustrating how the final group differs from a standard iterated permutational wreath product. In Section~\ref{sec:Neumann}, out of general interest, we show how this construction gives rise to some Neumann groups in $\Aut(\Sigma^\Z)$.

\subsection{The simplified construction}

The construction roughly follows that of Theorem~\ref{thm:GinG}, and the reader should be familiar with the proof. We consider the alphabet $\{1,2,{>},{<}\}^k$, $C = \{1, 2\}$. In a point of $\{1, 2, {>}, {<}\}^\Z$, we designate $\{{>}{>}, {>} C, C {<}, {<}, {<}\}$ as \emph{good words}, while the remaining words $\{CC, {<}C, C{>}, ><, <>\}$ are \emph{bad}.

A coordinate in a point of $\{1, 2, {>}, {<}\}^\Z$ is \emph{good} if both words of length $2$ containing it are good, and in a point over the alphabet $\{1,2,{>},{<}\}^k$, a coordinate is \emph{good} if it is good on all the $n$ tracks individually. Now, each doubly transitive configuration splits into finite maximal good runs, meaning maximal runs where on each track we only have letters which are not contained in a bad word. A good run over the alphabet $\{1,2,{>},{<}\}^k$ will give a good run on each single track, and these runs are of the form ${>}^* C {<}^*$, ${>}^+$, ${<}^+$, with the left- and rightmost letters extending to a good word on each side. Of course, bad words on any one track will also cut good runs on all other tracks so that they may no longer be maximal on that track.

It is clear that on good words of length $n$ on an individual track, we have an action of $\Z_{2n}$ which moves $1$-symbols to the left, and $2$-symbols to the right, except at the left end it flips $1$ to $2$ and at the right end it flips $2$ to $1$. Again we can think of this as a conveyor belt, with a single $1$ moving to the left on the ``top track'' and $2$ moving to the left on the ``bottom track''. This action does not change the set of good subwords on the track it modifies, so it also does not change the set of good positions of the composite configuration over $\{1,2,{>},{<}\}^k$.

More generally, if we only make modifications to good runs ${>}^m a {<}^n$ by changing the values of $m$ and $n$, and possibly changing $a$ to $3-a$ (even depending arbitrarily on the other tracks!), we will not modify the global set of areas where good runs appear. From this one gets a large variety of different behaviors, in particular, one obtains a simple embedding of $\Z \wr \Z$. This, and some other constructions are explored in the following sections.

\subsection{Some solvable groups obtained}
\label{sec:Solvable}


Let us now look at what kind of group we get if each head only affects heads with higher index. This type of construction intuitively should give a solvable group, since taking the commutator subgroup effectively eliminates one head at a time.

\begin{definition}
Let $n \in \N$ and let $K_1^n = \Z/n\Z$ (i.e.\ $K_1^0 = \Z$, and $K_1^n = \Z_n$ for $n \geq 1$). Then $K_1^n$ acts on $\Z/n\Z$ by the regular (translation) action. By induction, $K_k^n$ acts on $(\Z/n\Z)^k$, and we define $K_{k+1}^n = \Z/n\Z \wr_{(\Z/n\Z)^k} K_k^n$, which acts naturally on $(\Z/n\Z)^{k+1}$: the top group $K_k^n$ acts on the first $k$ coordinates by its natural action, and the generator for the base $\Z/n\Z$ (the delta at $e_{K_k^n}$) acts by the regular action on the last coordinate if and only if other coordinates are $0$.
\end{definition}


There is a natural generating set for $K_k^n$, where the generator $g_i$ adds $1$ to the $i$th coordinate if all smaller coordinates $\{1, 2, \ldots, i-1\}$ contain $0$ (modulo $n$, when $n \geq 1$). Using these generators, we can consider $K_k^n$ as elements of the space of marked groups on $k$ generators \cite{Gr84}. The following is easy to show.

\begin{lemma}
\label{lem:Limit}
$K_k^0 = \lim_n K_k^n$ in the space of marked groups.
\end{lemma}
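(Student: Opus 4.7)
The plan is to prove the statement by induction on $k$. By the definition of convergence in the space of marked groups, it suffices to show that for any fixed word $w$ in the generators $g_1, \ldots, g_k$, $w = e$ in $K_k$ if and only if $w = e$ in $K_k^n$ for all sufficiently large $n$. The base case $k = 1$ is immediate, as $\Z/n\Z$ converges to $\Z$ in the space of one-generator marked groups: $g_1^m = e$ in $\Z$ iff $m = 0$ iff $m \equiv 0 \pmod n$ for all $n > |m|$.

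For the inductive step I would use the semidirect product structure $K_k = \Z^{\oplus \Z^{k-1}} \rtimes K_{k-1}$ (and similarly for $K_k^n$) to split the triviality of $w$ into the triviality of the base component $\phi_w \in \Z^{\oplus \Z^{k-1}}$ and the $K_{k-1}$-component $h_w$. Writing $w = s_1 \cdots s_\ell$, the component $h_w$ is the image of $w$ in $K_{k-1}$ obtained by deleting all occurrences of $g_k^{\pm 1}$, so its triviality transfers between $K_{k-1}$ and $K_{k-1}^n$ for $n$ large by the inductive hypothesis. The base component is explicitly $\phi_w = \sum_{j : s_j = g_k^{\epsilon_j}} \epsilon_j \delta_{p_j}$, where $p_j \in \Z^{k-1}$ is the head position produced by the partial word $s_1 \cdots s_{j-1}$ acting on $0 \in \Z^{k-1}$ via the natural $K_{k-1}$-action; the analogous formula holds in $K_k^n$ with positions in $(\Z/n\Z)^{k-1}$.

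The main obstacle, and the technical heart of the argument, is that the $K_{k-1}$-action on $\Z^{k-1}$ is not globally equivariant with the $K_{k-1}^n$-action on $(\Z/n\Z)^{k-1}$ under coordinatewise reduction (the condition ``earlier coordinates are zero'' that controls whether a generator acts can fail in $\Z$ while holding mod $n$). This is rescued by a boundedness observation: each generator moves the head by at most a standard basis vector, so every position $p_j$ encountered in the evaluation of $w$ lies in $[-\ell, \ell]^{k-1}$. For $n > \ell$, the condition that the first $i-1$ coordinates vanish agrees with its mod-$n$ version on this range, so the positions computed in $K_k^n$ are exactly the coordinatewise reductions of the $p_j$.

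Finally, for $n > 2\ell + 1$ the reduction restricted to $[-\ell, \ell]^{k-1}$ is injective, and each coefficient of $\phi_w$ is a signed count of at most $\ell$ many $\pm 1$'s, hence lies in $[-\ell,\ell]$, so vanishes if and only if it vanishes modulo $n$. Therefore $\phi_w = 0$ in $\Z^{\oplus \Z^{k-1}}$ iff its reduction, which is the base component of $w$ in $K_k^n$, is $0$. Combining this with the inductive hypothesis for $h_w$ gives $w = e$ in $K_k$ iff $w = e$ in $K_k^n$ for all sufficiently large $n$, completing the induction.
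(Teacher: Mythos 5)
The paper itself states this lemma with no proof, only the remark that it is ``easy to show,'' so there is nothing to compare against; I can only check your argument, which is correct. Your induction is the natural one: write $K_k = \Z^{\oplus \Z^{k-1}} \rtimes K_{k-1}$, split a word $w$ into its $K_{k-1}$-component $h_w$ (handled by the inductive hypothesis) and its base component $\phi_w = \sum_j \epsilon_j\delta_{p_j}$, and then observe two things: (i) since every generator moves the head by at most one unit in one coordinate, the positions $p_j$ lie in $[-\ell,\ell]^{k-1}$ with $\ell = |w|$, so for $n > \ell$ the predicate ``coordinates $1,\dots,i-1$ are zero'' is the same in $\Z$ and mod $n$ along the whole trajectory, and hence the positions computed in $K_{k-1}^n$ are exactly the reductions of the $p_j$; and (ii) for $n > 2\ell$ the reduction is injective on $[-\ell,\ell]^{k-1}$ and on each coefficient of $\phi_w$ (each being a sum of at most $\ell$ signs), so $\phi_w = 0$ over $\Z$ iff its reduction vanishes in $(\Z/n\Z)^{\oplus(\Z/n\Z)^{k-1}}$. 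Together with the inductive hypothesis for $h_w$ this gives, for all sufficiently large $n$, that $w=e$ in $K_k$ iff $w=e$ in $K_k^n$, which is exactly convergence of marked groups. One remark worth making: it is tempting to try to get the ``relations of $K_k$ hold in $K_k^n$'' direction for free by exhibiting $K_k^n$ as a marked quotient of $K_k$; but this actually fails for $k\geq 3$ (the paper's own Proposition in this section exhibits a word trivial in $K_3$ but nontrivial in $K_3^n$), so the coordinatewise reduction $\Z^{k-1}\to(\Z/n\Z)^{k-1}$ is not equivariant and your direct biconditional argument, with the length-dependent threshold on $n$, is genuinely needed. The only blemish is cosmetic: you phrase the goal as ``$w=e$ in $K_k$ iff $w=e$ in $K_k^n$ for all sufficiently large $n$,'' which should be read with the quantifier outside the biconditional (``for all large $n$, $w=e$ in $K_k$ iff $w=e$ in $K_k^n$''), and that is clearly what your proof establishes.
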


\begin{definition}
Define $\hat K_k$ as the group generated by $k$ elements $g_1, \ldots, g_k$ with relations precisely the relations that hold in all of the $K_k^n$.
\end{definition}

\begin{theorem}
For all $k$, $\hat K_k \in \mathcal{G}$.
\end{theorem}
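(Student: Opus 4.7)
The plan is to realize $\hat{K}_k$ inside $\Aut(\Sigma^\Z)$ using the simplified construction of this section with $k$ tracks over the alphabet $\{1,2,{>},{<}\}$. I will define automorphisms $\phi_1, \ldots, \phi_k$ so that on each good run of length $n$ the restricted action factors onto the natural action of $K_k^n$ on $(\Z/n\Z)^k$, with $\phi_i$ playing the role of $g_i$. Since every nontrivial $w \in \hat{K}_k$ is by definition nontrivial in some $K_k^n$, the corresponding automorphism $w(\phi_1,\ldots,\phi_k)$ will then act nontrivially on some good run, yielding the embedding.

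Concretely, $\phi_1$ performs the $\Z_{2n}$-conveyor-belt shift of the track-$1$ head unconditionally. For $i \geq 2$, the automorphism $\phi_i$ performs the analogous shift on track $i$ at the unique head position, but only when a locally detectable pattern on tracks $1, \ldots, i-1$ encodes ``coordinate $j = 0$'' at that position (for example, each such track carries its head in a canonical ``zero'' alignment relative to the head on track $i$). Each $\phi_i$ then admits a finite local rule; shift-commutation is immediate, and bijectivity follows because the conditioning depends only on symbols that $\phi_i$ does not touch. By the observation at the start of Section~\ref{sec:Iteration}, good runs are preserved, so $\langle \phi_1,\ldots,\phi_k\rangle \leq \Aut(\Sigma^\Z)$ is well-defined and restricts to a finite permutation action on each good run.

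The hard part will be arranging this conditioning so that the induced action on a good run of length $n$ in which every track carries a head is exactly the natural $K_k^n$-action, with no extraneous relations. Two technicalities must be handled: first, the raw $\Z_{2n}$ conveyor-belt action has order $2n$ rather than $n$ on a single track, so the ``top/bottom'' parity of the belt must be absorbed, e.g., by replacing $\phi_i$ with $\phi_i^2$ or by a suitable choice of head coordinate; second, the property ``coordinate $j = 0$'' is a priori nonlocal, as it refers to the position of head $j$ relative to the endpoints of its good run, so the condition must be rephrased as a locally checkable synchronization between the heads on tracks $j<i$ and the head on track $i$ (say, by demanding that all involved heads be mutually aligned at a specific offset). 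Both issues are bookkeeping within the framework of Section~\ref{sec:Iteration}, and once the correct encoding is fixed the required match between $\phi_i$ and $g_i$ on each finite good run should follow by direct inspection.
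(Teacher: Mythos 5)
There is a genuine gap, and it is the central point the paper's proof has to deal with.

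You set up $\phi_i$ so that it shifts the head on track $i$ only, and you want the condition ``coordinates $1,\ldots,i-1$ are $0$'' to be read off locally, which you (correctly) observe must be recast as an alignment condition between the heads on tracks $1,\ldots,i-1$ and the head on track $i$. But these two design choices are incompatible. If the condition for $\phi_i$ to act involves the position of head $i$ relative to the other heads, then $\phi_i$ changes the very data it conditions on: an aligned state maps to an unaligned state, and the unaligned state is fixed by $\phi_i$, so $\phi_i$ collapses two distinct configurations and is not injective. Your stated justification for bijectivity --- ``the conditioning depends only on symbols that $\phi_i$ does not touch'' --- cannot be reconciled with an alignment condition involving head $i$; and if instead the condition really were independent of head $i$, it would have to refer to head positions relative to the belt endpoints, which is exactly the nonlocality you identified. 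There is no linear change of coordinates on $(\Z/n\Z)^k$ that makes ``$j<i$ coordinates vanish'' both local and independent of $h_i$, so this is not a bookkeeping detail that can be absorbed later.

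The paper's fix is to change what $g_i$ does: its automorphism adds $2$ to the belt positions on \emph{all} of tracks $1,\ldots,i$ simultaneously, precisely when the heads on those tracks are mutually aligned. Since all $i$ heads move together, alignment is preserved and the map is a bijection of the right order. Under the conjugation $(2h_1,\ldots,2h_k)\mapsto(h_1-h_2,\ldots,h_{k-1}-h_k,h_k)$ this is exactly the natural action of $K_k^n$: ``all heads $1,\ldots,i$ aligned'' becomes ``coordinates $1,\ldots,i-1$ are $0$'', and ``move all $i$ heads by one'' becomes ``add $1$ to coordinate $i$''. That coordinate change, and the fact that the automorphism must move all $i$ heads rather than just the $i$th, is the missing idea in your proposal; it is what makes the construction locally computable, bijective, and isomorphic to the intended group on each good run, and it is also what the paper means by ``our automorphisms will also move the first $i-1$ heads when moving the $i$th, but once the $i$th head returns back, this shift has been undone''.
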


We show below that $\hat K_2 = \Z \wr \Z$, so this gives a simpler construction for this group in $\mathcal{G}$, as suggested in the previous section.

\begin{proof}
We use the construction from the previous section, on alphabet $\{1,2,{>},{<}\}^k$. As in the description above, we interpret good runs, which are either degenerate, or contain a unique \emph{head} which is an element of $\{1, 2\}$. To a head we associate its \emph{belt position} by $p(0^m 1 0^n) = m$, $p(0^m 2 0^n) = m+2n+1$.

The automorphism corresponding to $g_i$ adds $2$ to the belt positions on tracks $\{1, \ldots, i\}$ if and only if there are indeed heads on all these tracks, and their belt positions are equal. Concretely, all of the tracks should have a $1$-symbol in the same position, or all should have a $2$-symbol in the same position. (We move the heads' belt positions $2$ steps at a time simply so that we effectively get also tapes of odd length.) 

It is immediate that on a single good run, where all heads are present and in belt positions with the same parity, this mimics the action above. To conjugate the actions when the parities are even, the tuple of belt positions $(2h_1, \ldots, 2h_k)$ is mapped to $(h_1 - h_2, h_2 - h_3, \ldots, h_{k-1} - h_k, h_k)$. As in the main construction (for wreath products with base $\Z$), the point is that our automorphisms will also move the first $i-1$ heads when moving the $i$th, but in any case once the $i$th head returns back, this shift has been undone.

If $\ell$ is minimal such that a head is missing from the $\ell$th track in some good run (i.e.\ that tape is degenerate) or the belt position of the head has a different parity than head on the previous track, then on that particular good run we simply mimic the quotient action of $K^n_{\ell-1}$.
\end{proof}

The following proposition illustrates (for this specific example) that iterating the main construction does not lead to an iterated permutational wreath product ($K_k^0$), but instead to a product of finite iterated permutational wreath products ($\hat K_k$), and these can be different.

\begin{proposition}
The map $g_i \mapsto g_i$ gives a homomorphism from $\hat K_k \to K_k^0$.
This is an isomorphism if 
$k \leq 2$.
If $k \geq 3$, then $K_k^0$ is torsion-free, but $\hat K_k$ is not.
All of the groups $K_k^0$, $\hat K_k$ are solvable.
\end{proposition}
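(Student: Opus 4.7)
The plan is to dispatch the four assertions in sequence, using the observation that $\hat K_k$ is by definition a subdirect product inside $\prod_{n \geq 1} K_k^n$. The homomorphism $\hat K_k \to K_k$ is immediate: any relation of $\hat K_k$ holds in every $K_k^n$, hence in all sufficiently large $K_k^n$, and by the previous lemma ($K_k = \lim_n K_k^n$ as marked groups) these are precisely the relations of $K_k$; so $g_i \mapsto g_i$ induces a surjection $\hat K_k \twoheadrightarrow K_k$.

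For $k \leq 2$ I would construct the reverse arrow. The case $k = 1$ is trivial as $K_1 = \Z$ admits no non-trivial relation. For $k = 2$, since the translation action of $K_1 = \Z$ on itself is equivariant with reduction modulo $n$, the map $g_i \mapsto g_i$ does extend to a homomorphism $K_2 \to K_2^n$ for every $n$; hence $R(K_2) \subseteq \bigcap_n R(K_2^n) = R(\hat K_2)$, yielding the reverse surjection and so $K_2 = \hat K_2$.

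For $k \geq 3$ this equivariance fails: $g_2$ fires at coordinate $1 = 0$ in $K_2$, whereas in $K_2^n$ it fires whenever coordinate $1 \equiv 0 \pmod n$. I will exploit the discrepancy by exhibiting the torsion element $w = [g_1^2 g_2 g_1^{-2},\, g_3]$, lying in the subgroup generated by $g_1, g_2, g_3$. In $K_k$ and in $K_k^n$ for every $n \neq 2$, the conjugate $g_1^2 g_2 g_1^{-2}$ fixes the origin of the acting space $\Z^{k-1}$, resp.\ $(\Z/n\Z)^{k-1}$: after $g_1^{-2}$ the first coordinate is $-2 \not\equiv 0$, so $g_2$ does nothing and $g_1^2$ restores the state. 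Thus the conjugate commutes with the lamp $g_3$ at the origin and $w = e$. In $K_k^2$, however, $g_1^2 = e$ collapses the conjugate to $g_2$, which moves the lamp from $(0,0,\dots)$ to $(0,1,0,\dots)$; the commutator $w$ then becomes a difference of two distinct base generators in $(\Z/2\Z)^{(\Z/2\Z)^{k-1}}$, an element of order exactly $2$. So $w$ has order $2$ in $\hat K_k$ while it is trivial in $K_k$, establishing both non-injectivity of $\hat K_k \to K_k$ and the torsion claim.

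Torsion-freeness of $K_k$ is a routine induction on the iterated wreath description $K_k = \Z \wr_{\Z^{k-1}} K_{k-1}$: if $(f, h)^m = e$ then $h^m = e$ forces $h = e$ by induction, and then $mf = 0$ in the torsion-free abelian base. Solvability of $K_k$ follows from the same induction (abelian base over solvable quotient), and the identical induction shows that each $K_k^n$ has derived length at most $k$ uniformly in $n$; since $\hat K_k$ embeds in $\prod_n K_k^n$, it too is solvable of derived length at most $k$. The only nontrivial computational step is the verification of $w$ in $K_k^n$ for every $n$, which I expect to go through cleanly once the actions on $\Z^{k-1}$ and $(\Z/n\Z)^{k-1}$ are written out explicitly.
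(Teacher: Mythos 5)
Your proposal is correct and takes essentially the same route as the paper's proof. The paper's torsion element is $g_3^h g_3^{-1}$ with $h = g_2^{g_1^n}$ for a generic $n \geq 2$, argued to have finite order because it acts nontrivially only in the finitely many finite groups $K_3^\ell$ with $\ell \mid n$; your $w = [g_1^2 g_2 g_1^{-2}, g_3]$ is the specialization $n = 2$, which has the minor advantage of pinning the order down to exactly $2$, and your equivariance argument for $K_2 \twoheadrightarrow K_2^n$ fills in what the paper only asserts is ``easy to verify.'' One small slip: for $k > 3$ the commutator $w$ in $K_k^2$ lands in $(\Z/2\Z)^{(\Z/2\Z)^{2}}$, the base of the copy of $K_3^2$ sitting inside $K_k^2$, rather than in $(\Z/2\Z)^{(\Z/2\Z)^{k-1}}$ as written; the conclusion that $w$ has order $2$ is unaffected.
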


\begin{proof}
The first statement is clear from $K_k^0 = \lim_n K_k^n$. For the second, it is obvious that $\hat K_1 = K_1^0 = \Z$, and from Lemma~\ref{lem:Limit} it is clear that $K_2^0 \cong \Z \wr \Z$.

By the first statement, we have a homomorphism $\hat K_2 \to \Z \wr \Z$. We show that this is an isomorphism. Consider an element in the kernel, i.e.\ $g \in \hat K_2$ which is represented as a product of $g_1$ and $g_2$ whose $\Z \wr \Z$ action on $\Z^2$ is trivial. The generator $g_1$ is then applied equally many times forward and backward, so the same happens in the first coordinate of $\Z_n^2$ when $g$ is applied. Since $g$ acts trivially also in the second coordinate of $\Z^2$, for every $\ell$ we must apply $g_2^{g_1^\ell}$ equally many times forward and backward, so the same is true in $\Z_n^2$. Thus, the kernel is actually trivial, so we indeed have an isomorphism. 




Next, it is easy to see that the groups $K_k^0$ are all torsion-free and $\hat K_{k+1}$ contains a copy of $\hat K_k$, so for the third statement, it now suffices to show that $\hat K_3$ is not torsion-free.

Let $n \geq 2$. We observe that $h = g_2^{g_1^n}$ stabilizes $(0, 0)$ in $K_2^0$, but does not stabilize $(0, 0)$ in the action of $K_2^n$. This means $g_3^h g_3^{-1}$ is trivial in $K_3^0$ but not in $K_3^n$. Since $K_3^0$ is the limit of the groups $K_3^\ell$, $g_3^h g_3^{-1}$ is trivial in $K_3^\ell$ for sufficiently large $\ell$. Thus its order is bounded over the finitely many finite groups $K_3^\ell$ where it acts nontrivially, and therefore its order is finite in $\hat K_3$.

For the last statement, 
it suffices to show that $\hat K_k$ is solvable, as $K_k^0$ is its quotient. Consider for $N \subset \N$ the group $K_k^N$ where we take generators $g_i$ under precisely the relations that hold in all of the $K_k^n$. This is just the subgroup of $\prod_{n \in N} K_k^n$ generated by the $k$ many diagonal elements $(g_i, g_i, g_i, \ldots)$.
Now we observe that the entire group $\prod_{n \in N} K_k^n$ is solvable of derived length at most $k$, as a product of groups of derived length at most $k$. For this, recall that solvable groups of degree $k$ form a variety, or concretely observe that $[\prod_i G_i, \prod_i G_i] \leq \prod_i [G_i, G_i]$ for any groups $G_i$ by a direct computation. (One can check that the derived length of both $K_k^0$ and $\hat K_k$ is precisely $k$.)
\end{proof}

\subsection{Neumann groups}
\label{sec:Neumann}

Next, let us look at what we get when one has a single head.

\begin{definition}
\label{def:Neumann}
Let $3 \leq n_1 < n_2 < \ldots$ be positive numbers, and let $u_{i,j}$ be distinct elements for $1 \leq j \leq n_i$. A \emph{generalized Neumann group} corresponding to the sequence $(n_i)_i$ is the subgroup of the product of symmetric groups $\prod_i \Sym(\{u_{i,j} \;|\; 1 \leq j \leq n_i\})$ which is generated by the permutations $a = \prod_i (u_{i,1}; \; u_{i,2}; \; u_{i,3})$ and $b = \prod_i (u_{i,1}; \; u_{i,2}; \; \ldots; \; u_{i,n_i})$. If all the $n_i$ are odd, we call these \emph{Neumann groups}. 
\end{definition}

Neumann defines the Neumann groups in \cite{Ne37}. Note that in Neumann groups, $a$ and $b$ act by even permutations, and thus their actions can be taken to be by elements of $\prod_i \Alt(\{u_{i,j} \;|\; 1 \leq j \leq n_i\})$. He shows that then two such groups are isomorphic if and only if the defining sequences are the same. 
We include the proof of this fact. 

\begin{lemma}
\label{lem:Determines}
Let $G$ be the Neumann group coming from a sequence $5 \leq n_1 < n_2 < \ldots$ of odd numbers. Then the isomorphism type of $G$ determines the sequence $(n_i)_i$. In fact, the sequence is determined by the isomorphism types of finite simple normal subgroups.
\end{lemma}

\begin{proof}
The element $[a, a^{b^{n+3}}]$ is nontrivial in $A_{n+5}$ (where $a = (1; \; 2; \; 3)$ and $b = (1; \; 2; \; \ldots; \; n)$), and trivial in $A_{n+m}$ for $m \geq 6$. By induction on $i$, it is then easy to show that the natural $A_{n_i}$ in $G$ (other $A_n$ acting trivially) is a subgroup of $G$, as once we have built $A_{n_{i-1}}$, the normal closure of $[a, a^{b^{n_i-2}}]$ is a subgroup of $\prod_{h \leq i} A_{n_i}$ surjecting to $A_{n_i}$, and we can cancel the smaller $A_{n_h}$. Furthermore, this $A_{n_i}$ is clearly normal.

On the other hand, consider any finite normal subgroup of $G$. Its elements clearly have nontrivial projection to only finitely many of the $\Alt(\{u_{i,j} \;|\; j \leq n_i\})$. It is then a straightforward consequence of Goursat's lemma \cite{Go89} that if such a group is simple, it is precisely one of the $\Alt(\{u_{i,j} \;|\; j \leq n_i\})$. All in all the $n_i$ are determined by isomorphism types of finite simple normal subgroups of $G$.
\end{proof}

\begin{lemma}
Let $3 \leq n_1 < n_2 < \ldots$ be any eventually periodic sequence of numbers, meaning for some $p \geq 1$ we have
\[ \exists n_0: \forall n \geq n_0: n \in \{n_i \;|\; i \in \Z_+\} \iff n+p \in \{n_i \;|\; i \in \Z_+\}. \]
Then the corresponding generalized Neumann group $G$ is in $\mathcal{G}$.
\end{lemma}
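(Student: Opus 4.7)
The plan is to realize the Neumann group $G$ as $\langle A, B \rangle \leq \Aut(\Sigma^\Z)$ via the simplified construction of this section, by specifying an invariant countable set $\Omega = \bigsqcup_i \Omega_i$ with $|\Omega_i| = n_i$ on which $A, B$ act as the generators $a, b$ of $G$. The key reason eventual periodicity is enough is that a cellular automaton with bounded neighborhood can detect a ``good length $L \bmod p$ lies in the residue class $S$'' condition, provided this condition is encoded locally via a periodic marker on an auxiliary track.

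Concretely, I would take alphabet $\Sigma = \{1,2,{>},{<}\}^k$ with the primary track hosting the head dynamics as in the simplified construction, and $k-1$ auxiliary tracks carrying a period-$p$ marker pattern. For each $i$ above the periodicity threshold $n_0$, the block $\Omega_i$ consists of configurations whose primary track has a single good run of length $L_i$ chosen so that the belt has exactly $n_i$ relevant head positions (with $L_i = n_i/2$ when $n_i$ is even, and an analogous parity-splitting adjustment using the two-head belt structure when $n_i$ is odd), decorated on the auxiliary tracks by a marker that is locally consistent with the good-run boundaries precisely when $L_i \bmod p$ is in the eventual residue set $S$ of $(n_i)$. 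The finitely many small blocks (those with $n_i < n_0$) are handled by explicit local rules on short good runs.

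The automorphism $B$ is defined to shift the primary head by one belt position whenever the marker is in a valid state, giving the $n_i$-cycle on $\Omega_i$; the automorphism $A$ performs a 3-cycle on the three belt positions adjacent to the left wall whenever the marker is valid, giving the 3-cycle on $\Omega_i$. Both are cellular automata in the disciplined form of the simplified construction (modifying only $m, n, a$ in patterns ${>}^m a {<}^n$), and both use only a bounded-radius view: the shift and 3-cycle are intrinsically local, while the marker-validity check uses the alignment of the periodic pattern with the left wall of the good run, which is within constant distance of every site where the rule could trigger a modification. By choosing the marker rigidly---so that an ``invalid'' marker produces no modification at all---we ensure the CAs preserve the set $\Omega$ and act trivially off it (except for the trivial action on misaligned decorations, which is harmless).

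To conclude, I would check that the generator relations of the Neumann group are satisfied by $A, B$ restricted to $\Omega$, yielding a surjection $G \twoheadrightarrow \langle A, B\rangle|_\Omega$, and then use the characterization from the preceding lemma---that the finite simple normal subgroups $\Alt(n_i)$ are determined by the sequence---to deduce injectivity: a nontrivial element of $G$ acts nontrivially on some block $\Omega_i$, and the corresponding product of $A$'s and $B$'s inherits this nontriviality inside $\langle A, B\rangle$. The main obstacle I would expect is the design of the auxiliary marker so that it cleanly selects exactly the desired good-run lengths without accidental validity on nearby lengths (which would introduce unintended relations); this is why eventual periodicity of $(n_i)$, rather than just an arbitrary recursive set, is essential---it is exactly the condition that admits a locally verifiable marker of bounded width.
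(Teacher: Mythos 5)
Your high-level idea (use a periodic auxiliary marker to single out the admissible good-run lengths) is the same mechanism the paper uses, but your implementation has a genuine locality gap that the paper's construction is designed specifically to avoid.

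The issue is the claim that ``the marker-validity check uses the alignment of the periodic pattern with the left wall of the good run, which is within constant distance of every site where the rule could trigger a modification.'' This is false for the generator $B$: the head can be at any belt position, hence arbitrarily far from both walls. Whether the good run has admissible length modulo $p$ depends on the \emph{global} consistency of the marker across the entire run; a cellular automaton at the head position can only verify local consistency of the period-$p$ pattern in a bounded window. So ``shift iff the marker is valid'' is not a local rule, and making it local (e.g.\ ``shift iff the marker is locally consistent near the head'') changes the orbit structure: on a run whose marker is locally valid near the head but flawed elsewhere, the head would move some steps and then stall, which is not the action of any $\Alt(n_i)$ and would introduce spurious relations. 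The paper sidesteps this entirely by \emph{cutting} good runs rather than marking them: within each primary-track good run it reads maximal $w$-blocks (with $w = 12\cdots k$) on the auxiliary track and refines the run partition at every deviation from the period-$k$ pattern. This is a purely local rewriting of what counts as a good run, it guarantees that inside any (refined) good run the auxiliary marker is automatically globally consistent, and it forces the refined run lengths to lie in a fixed residue class mod $k$; one then reuses the original $a$- and $b$-rules unchanged on the refined runs.

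A second, smaller divergence: the paper reduces an eventually periodic set to a finite union of arithmetic progressions, treats one progression with one auxiliary track, and assembles the general case as a \emph{product} of such actions (one per progression). You instead try to encode the whole residue set $S \subseteq \Z/p\Z$ into a single family of markers on $k-1$ tracks; this isn't impossible in principle, but you don't specify how to distinguish the several allowed residues without reintroducing the global-consistency problem above. The progression-by-progression reduction plus product is both cleaner and what makes the locality argument go through.

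Finally, the injectivity argument you sketch (``surjection, then use the lemma on finite simple normal subgroups'') is not quite the right shape: the Neumann group is not finitely presented, so one cannot argue by checking finitely many relations. The paper's (terse) justification is the direct observation that $\langle A, B\rangle$ acts on the disjoint union of the $\Omega_i$ as the prescribed generators of $\prod_i \Alt(n_i)$ and trivially elsewhere, so it is literally the image of the Neumann group under its defining representation, hence isomorphic to it.
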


By Lemma~\ref{lem:Determines}, the Neumann cases where the $n_i$ are odd and at least $5$ give an infinite number of non-isomorphic finitely-generated groups in $\mathcal{G}$ with different sets of isomorphism types of finite simple normal subgroups.

\begin{proof}
First we consider the sequence $6 < 8 < 10 < \ldots$. We use the alphabet $\{{>}, {<}, 1, 2\}$ and the same rule as in the previous section to determine good runs. In good runs of length $n \leq 2$, we do nothing. For larger $n$, the automorphism corresponding to $a$ rotates the belt positions $(1; \; 2; \; 3)$ meaning at the beginning of each good run we perform the permutation $(1{<}{<}; \; {>}1{<}; \; {>}{>}1)$. The automorphism corresponding to $b$ is just the rotation of the head around the conveyor belt, as in the previous section. Obviously on finite good runs this is isomorphic to the natural action of the pseudo-Neumann group.

For other sequences, it suffices to construct the natural action of the group for a single sequence $\ell, \ell+k, \ell+2k, \ldots$, since we can then take the product action of Neumann groups corresponding to finitely many such sequences to get any Neumann group with an eventually periodic sequence.

Consider the alphabet $\{{>}, {<}, 1, 2\} \times \{1, 2, \ldots, k\}$. First, use the same rule as above to cut the sequence on the first track over $\{{>}, {<}, 1, 2\}$ into good runs. Then in each good run, on the second track over alphabet $\{1, 2, \ldots, k\}$, read maximal concatenations $w^n$ of the word $w = 1 2 \cdots k$. Cut the good word by these runs, including a full copy of $w$ on the left, and only the first $\ell$ letters in the rightmost good run. This forces good runs to be effectively of length $\ell \bmod k$. Now the generators with the same description as above (but with the new description of good runs) form the Neumann group.
\end{proof}

We note that the previous construction can also be performed in the topological full group $\llbracket \Sigma^\Z \rrbracket$ of $\Sigma^\Z$ for some alphabet $\Sigma$ (and therefore any nontrivial alphabet \cite{Sa21c}), by for a single arithmetic progression defining good runs to consist of maximal powers $w^n$ of the word $w = 12 \cdots (2k) \in \{1, \ldots, 2k\}^{2k}$ and dropping $2(n - \ell)$ positions on the right, and using the origin of the configuration as the head position (using odd and even positions instead of the symbol $s \in \{1, 2\}$ to determine whether we are on the top or bottom track of track of the conveyor belt). To get multiple arithmetic progressions, one can simply use finitely many disjoint alphabets.


\section{Lemmas used in the introduction}

The following shows that after we restrict the base groups to be abelian, wreath products cannot escape co-NPness of the word problem.

\begin{lemma}
\label{lem:WreathNP}
Let $A, B$ be finitely generated groups, such that $A$ is abelian and $B$ has co-NP word problem. Then $A \wr B$ has co-NP word problem.
\end{lemma}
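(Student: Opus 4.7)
My plan is to use the semidirect product structure $A \wr B = A^{(B)} \rtimes B$ and handle nontriviality separately in the top group $B$ and in the base group. Fix finite generating sets $S_A$ for $A$ and $S_B$ for $B$, and take $S = S_A \sqcup S_B$ as generators for $A \wr B$. Given a word $w = s_1 \cdots s_n$ over $S \cup S^{-1}$, let $b_i \in B$ denote the $B$-projection of the prefix $s_1 \cdots s_i$ (a word of length at most $i$ over $S_B^{\pm}$), and set $I_A(w) = \{i : s_i \in S_A \cup S_A^{-1}\}$. Unfolding the semidirect product, $w$ evaluates to $(f, b_n) \in A^{(B)} \rtimes B$, where the finitely supported $f : B \to A$ is given by $f(x) = \sum_{i \in I_A(w),\, b_{i-1} =_B x} s_i$. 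Hence $w =_{A \wr B} e$ iff $b_n =_B e_B$ \emph{and} $f \equiv 0_A$, and to put the word problem in co-NP it suffices to provide polynomial-size NP certificates for each of the two ways $w$ can be nontrivial.

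The first case, $b_n \ne_B e_B$, is handled directly by applying $B$'s co-NP word problem to the length-$\le n$ word $b_n$, which produces a short non-identity certificate by assumption.

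The second case, $f \not\equiv 0_A$, is the main obstacle. The naive certificate ``exhibit $x$ with $f(x) \ne 0_A$'' would need to pin down which indices $i \in I_A(w)$ contribute to $f(x)$, i.e.\ certify equalities $b_{i-1} =_B x$, but co-NP word problem gives short certificates only for \emph{in}equalities. The trick I would use is to instead guess a subset $J_+ \subseteq I_A(w)$ together with, for each pair $(i, j) \in J_+ \times (I_A(w) \setminus J_+)$, a short NP certificate from $B$'s co-NP word problem for $b_{i-1} b_{j-1}^{-1} \ne_B e_B$. The verifier then checks all these inequalities and checks $\sum_{i \in J_+} s_i \ne 0$ in the f.g.\ abelian group $A$, which is polynomial-time since f.g.\ abelian groups have linear-time word problem. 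The inequality checks force $J_+$ to be saturated under the equivalence $i \sim j \iff b_{i-1} =_B b_{j-1}$ on $I_A(w)$, so $\sum_{i \in J_+} s_i = \sum_{C} f(x_C)$ summed over the equivalence classes $C \subseteq J_+$ with representative values $x_C$; nonvanishing of this sum in $A$ forces $f(x_C) \ne 0_A$ for at least one class $C$. Conversely, if $f(x) \ne 0_A$ for some $x$, taking $J_+$ to be the single equivalence class $\{i \in I_A(w) : b_{i-1} =_B x\}$ gives a valid certificate with $\sum_{i \in J_+} s_i = f(x) \ne 0_A$.

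Both certificates have size polynomial in $n$ (the bulk being $O(n^2)$ short NP certificates from $B$, each on a word of length $\le 2n$), and verification is polynomial time, so $\{w : w \ne_{A \wr B} e\} \in \mathrm{NP}$, which is exactly saying that $A \wr B$ has co-NP word problem.
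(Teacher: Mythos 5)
Your proof is correct and takes essentially the same approach as the paper: both split on whether the $B$-projection is trivial, and both handle the base-group part by using $B$'s co-NP certificates to pin down (enough of) the equivalence $b_{i-1} \sim b_{j-1}$ among prefix projections so that a nonvanishing sum in $A$ over one saturated block certifies nontriviality. The paper's version is terser (``witnesses for all non-equalities between unequal $b_i$, or sufficiently many''), while you spell out the ``sufficiently many'' option concretely by guessing a single saturated block $J_+$ and certifying its separation from the complement; this is the same idea made fully explicit.
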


\begin{proof}
Let $\pi : A \wr B \to B$ be the natural projection. Pick a generating set $S$ for $B$. We exhibit polynomially checkable witnesses for the nontriviality of an element in the wreath product. Since $B$ has co-NP word problem, for elements with nontrivial $\pi(g)$ we can use the certificates of $B$. Elements with trivial $\pi(g)$ are finite products of the form
$\prod_i a_i^{b_i}$
where $a_i \in A, b_i \in S^*$. Writing $b_i \sim b_j$ for equality as elements of $B$, the nontriviality of the product means that for some $b$, the sum (in $A$) of elements $a_i$ corresponding to $b_i \sim b$ is nonzero. A polynomial witness then consists of witnesses for all non-equalities between unequal $b_i$ (using again the co-NPness of the word problem of $B$), or sufficiently many to conclude that the element is nontrivial.
\end{proof}

As noted in the footnote in the introduction, groups in $\mathcal{G}$ are in fact in co-NTIME$(n^d)$ for a fixed $d$, and the above proof does not show that wreath products could not increase the degree, at least without a more careful combinatorial analysis. It nevertheless seems unlikely that one could prove non-closure under wreath products this way, since no natural separating languages are known between co-NTIME$(n^d)$ and co-NTIME$(n^{d+1})$.

We cite \cite{MiVaWe19} as a general reference on decision problems about wreath products.

The following lemma shows that our Theorem~\ref{thm:MainUgly} does not provide examples of torsion-free groups in $\mathcal{G}$ which are not residually nilpotent (although it does not show that our main construction could not in principle provide examples). Note that $B \wr \Z^n$ gives torsion, so it suffices to consider the groups $C^n$.

\begin{lemma}
\label{lem:AllTFN}
If $A$ is a torsion-free abelian group, and $C$ is a free group, then $A \wr C^n$ is residually nilpotent.
\end{lemma}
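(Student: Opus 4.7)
The plan is to prove the stronger statement that $A \wr B$ is residually nilpotent whenever $A$ is torsion-free abelian and $B$ is residually-(torsion-free nilpotent). The lemma then follows because free groups are residually-(torsion-free nilpotent) by Magnus's theorem, this class is closed under finite direct products, and so $C^n$ satisfies the hypothesis on $B$.

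Given a nontrivial $g = (f, b) \in A^{(B)} \rtimes B$, the argument would split into two cases. If $b \neq 1$, composing with the projection $A \wr B \to B$ and invoking residual nilpotence of $B$ yields a separating nilpotent quotient. Otherwise $g = f$ is a finitely supported nonzero function with support $S$, and I would choose a homomorphism $\phi : B \to Q$ into a finitely generated torsion-free nilpotent group $Q$ with $\phi|_S$ injective, obtained by intersecting finitely many kernels that separate the finite collection of pairs of distinct points in $S$. The induced homomorphism $\phi_* : A \wr B \to A \wr Q$, sending $(h, c) \mapsto (\phi_* h, \phi(c))$ with $(\phi_* h)(q) = \sum_{\phi(x)=q} h(x)$, sends $f$ to a nonzero element because injectivity of $\phi|_S$ prevents cancellation.

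The remaining task is to show that $A \wr Q$ is residually nilpotent when $Q$ is finitely generated torsion-free nilpotent. For this I would use the standard identification $\gamma_n(A \wr Q) \cap A^{(Q)} = \omega^{n-1} \cdot A^{(Q)}$, where $\omega$ is the augmentation ideal of $\Z[Q]$ and $A^{(Q)}$ is viewed as the $\Z[Q]$-module $A \otimes_\Z \Z[Q]$ under the regular action. Since $Q$ is itself nilpotent, the projection $\pi : A \wr Q \to Q$ kills $\bigcap_n \gamma_n$ modulo $A^{(Q)}$, so the problem reduces to $\bigcap_n A \otimes \omega^n = 0$ in $A \otimes_\Z \Z[Q]$. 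Using flatness of the torsion-free $A$, this module embeds into $(A \otimes_\Z \Q) \otimes_\Q \Q[Q]$, which as a $\Q[Q]$-module is a direct sum of copies of $\Q[Q]$; hence the vanishing reduces coordinatewise to the statement $\bigcap_n \omega^n = 0$ in $\Q[Q]$. This last fact, classical for finitely generated torsion-free nilpotent $Q$ via Jennings' theorem identifying the associated graded algebra with a polynomial ring, is the main technical input I would invoke, and is the step where I expect the bulk of the work to lie, though it is well documented in the literature on group rings.
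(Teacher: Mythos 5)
Your proof follows the same overall reduction as the paper: replace $C^n$ by a torsion-free nilpotent quotient $Q$ (using Magnus's theorem that free groups are residually torsion-free nilpotent, and closure of that class under finite direct products) that still separates the given nontrivial element, and then appeal to residual nilpotence of $A \wr Q$ for $Q$ torsion-free nilpotent. The paper states the reduction a little more tersely but does the same thing; your explicit case split on whether $g$ lies in the base $A^{(B)}$, and the choice of $\phi$ injective on the finite support, are exactly the details one needs to justify the paper's phrase ``we can find a quotient such that $\hat\pi$ maps $g$ to a nontrivial element.''

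Where you genuinely diverge is the final step. The paper simply cites Hartley's Theorem~B2 for the fact that $A \wr Q$ is residually nilpotent when $A$ is torsion-free abelian and $Q$ is torsion-free nilpotent, while you sketch a self-contained argument via group rings. That argument is a reasonable alternative, but as written it leans on a slightly too clean identity: the claimed equality $\gamma_n(A \wr Q) \cap A^{(Q)} = \omega^{n-1}\cdot A^{(Q)}$ over $\Z[Q]$ is not automatic once $Q$ is nonabelian, because commutators $[f,q]$ with $q\in\gamma_k(Q)$ contribute $(1-q)f$, and whether $1-q\in\omega^k$ integrally is exactly the dimension subgroup problem. One direction ($\supseteq$) is easy; the other direction is the content. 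Your passage to $\Q[Q]$ rescues this, since for finitely generated torsion-free nilpotent $Q$ the rational dimension subgroups coincide with the lower central series (Jennings), so $1-q\in\omega_\Q^k$ for $q\in\gamma_k(Q)$ and one does get $\gamma_n\cap A^{(Q)}\subseteq \omega_\Q^{n-1}(A\otimes\Q)^{(Q)}$; combined with $\bigcap_n\omega_\Q^n=0$ and $\bigcap_n\gamma_n(Q)=1$ this gives $\bigcap_n\gamma_n(A\wr Q)=1$. So the plan works, but the $\Z$-level formula should not be asserted as an equality; you really only need (and only have, cheaply) the containment after tensoring with $\Q$. The trade-off between the two routes: the paper's citation to Hartley is shorter and already covers a more general statement, while your version makes the mechanism (Jennings' theorem on the graded group ring, flatness of torsion-free abelian groups) visible, at the cost of having to be careful about integral vs.\ rational augmentation powers.
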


\begin{proof}
In general, if $G, H$ are groups and $A$ is an abelian group, then a homomorphism $\pi : G \to H$ induces a homomorphism $\hat\pi : A \wr G \to A \wr H$, which maps as $\pi$ on $G$ and as identity on $A$.

Now, if $A$ is a torsion-free abelian group and $G$ is residually torsion-free nilpotent, then $A \wr G$ is residually nilpotent: For any nontrivial element $g \in A \wr G$, we can find a quotient $\pi : G \to H$ such that $\hat\pi : A \wr G \to A \wr H$ maps $g$ to a nontrivial element, and $H$ is torsion-free nilpotent. Since $A$ is torsion-free abelian and $H$ is torsion-free nilpotent, $A \wr H$ is residually nilpotent by \cite[Theorem~B2]{Ha70}, thus we can find a further quotient $\pi' : A \wr H \to N$ mapping $g$ to a nontrivial element in a nilpotent group. Since $g$ was arbitrary, $\pi' \circ \pi$ shows that $A \wr G$ is residually nilpotent.

The group $C^n$ is residually torsion-free nilpotent by \cite{Ma35}, so the claim is proved.
\end{proof}


\bibliographystyle{plain}
\bibliography{../../../bib/bib}{}

\end{document}